\newtheorem{theorem}{Theorem}[section]
\newtheorem{thmx}{Theorem}
\newtheorem{corx}[thmx]{Corollary}
\newtheorem{proposition}[theorem]{Proposition}
\newtheorem{lemma}[theorem]{Lemma}
\newtheorem{corollary}[theorem]{Corollary}
\newtheorem{question}[theorem]{Question}
\theoremstyle{definition}
\newtheorem{definition}[theorem]{Definition}
\theoremstyle{remark}
\newtheorem{remark}[theorem]{Remark}
\newcommand{\Aut}{\operatorname{Aut}}
\newcommand{\K}{{\Bbbk}}
\newcommand{\G}{\mathcal{G}}
\newcommand{\A}{\mathcal{A}}
\newcommand{\B}{{\mathcal B}}
\begin{document}

\title[Permutation representations and evolution algebras]{
Permutation representations and automorphisms of evolution algebras
}

\author{Cristina Costoya}
%    Address of record for the research reported here
\address{Departamento de Matemáticas (USC), and Centro de Investigación y Tecnología Matemática de Galicia (CITMAga)}
\email{cristina.costoya@usc.es}
%    \thanks will become a 1st page footnote.
%\thanks{The first author was partially supported by the Spanish Ministerio de Ciencia e Innovaci\'on, project PID2020-115155GB-I00}

%    Information for first author
\author{Pedro Mayorga}
%    Address of record for the research reported here
\address{Departamento de \'Algebra, Geometr{\'\i}a y Topolog{\'\i}a, Universidad de M{\'a}laga, 29071-M{\'a}laga, Spain}
\email{pedromayped@uma.es}
%    \thanks will become a 1st page footnote.

%    Information for  author
\author{Antonio Viruel}
\address{Departamento de \'Algebra, Geometr{\'\i}a y Topolog{\'\i}a, Universidad de M{\'a}laga, 29071-M{\'a}laga, Spain}
\email{viruel@uma.es}
%\thanks{The third author was partially supported by the Spanish Ministerio de Ciencia e Innovaci\'on, project PID2020-118753GB-I00, and Andalusian Consejer{\'\i}a de Universidad, Investigaci\'on e Innovaci\'on, project PROYEXCEL-00827}

\subjclass{05C25, 17A36, 17D99, 20B20}
\keywords{Evolution algebra, automorphism, permutation group, graph}

\begin{abstract}

We prove that the natural permutation representation of highly transitive finite groups cannot be realized as the full automorphism group of an idempotent, finite-dimensional evolution algebra acting on the set of lines spanned by its natural elements. Specifically, for any sufficiently large integer $n$ and $k \geq 4$, there does not exist an idempotent evolution algebra $X$ of dimension $n$ such that $\operatorname{Aut}(X)$ is isomorphic to a proper $k$-transitive subgroup of $S_n$.

Nevertheless, we show that for any finite group $G$, any permutation representation $\xi \colon G \to S_n$, and any field $\Bbbk$, there exists an idempotent, finite-dimensional evolution $\Bbbk$-algebra $X$ such that $\operatorname{Aut}(X) \cong G$, and the induced representation of $\operatorname{Aut}(X)$ on the natural idempotents of $X$ is equivalent to $\xi$.

\end{abstract}

\maketitle

%%%%%%%%%%%%%% INTRODUCTION %%%%%%%%%%%%%%%%%%%
\section{Introduction}

An $n$-dimensional evolution algebra $X$, defined over an arbitrary field $\Bbbk$, is a not necessarily associative and commutative algebra equipped with a natural basis $B = \{b_1, \ldots, b_n\}$ that satisfies the key property: $b_i b_j = 0$ whenever $i \neq j$. A notable feature of these algebras is that once the natural basis is fixed,  the products $b_i^2$ for $i = 1, \ldots, n$ are encoded in a matrix $M_B(X)$, known as the structure matrix, that fully determines the structure of the evolution algebra.

First introduced by Tian~\cite{Tian-Vojtechovsky,Tian}, evolution algebras serve as a framework to model self-reproduction in non-Mendelian genetics, building upon the concept of genetic algebras originally defined by Etherington~\cite{Etherington}. Since the publication of Tian's seminal works, they have attracted considerable attention, leading to significant progress in understanding their structure~\cite{Cabrera-Siles-Velasco, tensor_product, natural_element2, Camacho-Gomez, Casas-Ladra} and, in their classification for low-dimensional cases~\cite{CABRERACASADO201768, casado2022simple, 2-dim-perfect, CALDERONMARTIN2024107594, ELDUQUE201611, Sriwongsa-Zou}.

A central result in the study of automorphism groups of evolution algebras, with implications for their classification, is due to Elduque and Labra~\cite{Elduque-Labra-2019}. They showed that for an $n$-dimensional idempotent evolution algebra $X$ (i.e., $X^2 = X$), there exists an exact sequence:
\begin{equation}\label{eq:intro}
1 \to D \to \Aut(X) \xrightarrow{\rho_X} S_n,
\end{equation}
where $ \Aut(X) $ denotes the automorphism group of the algebra, $ S_n $ is the symmetric group on $ n $ elements, and $ D $ is a finite abelian group related to unity roots in $ \Bbbk $.
Hence, a direct consequence of Equation \eqref{eq:intro} is that for any finite-dimensional idempotent evolution $ \Bbbk $-algebra $ X $, $ \Aut(X) $ is a finite group.

Conversely, it is known that any finite group is the automorphism group of an idempotent evolution algebra. More precisely, for any finite group $ G $, and any field $ \Bbbk $, there exists a finite-dimensional idempotent evolution $ \Bbbk $-algebra $ X $ such that $ \Aut(X) \cong G $ \cite[Theorem 1.1]{CLTV}. This result was independently proved when $ \operatorname{char}(\Bbbk) = 0 $ \cite[Theorem 3.1]{Sriwongsa-Zou}. Moreover, when the units of $ \Bbbk $ satisfy $ |\Bbbk^*| \geq 2|G| $, then evolution algebra $ X $ can be chosen to be simple \cite[Theorem A]{CMTV}.

In the light of Equation \eqref{eq:intro} and the preceding result  \cite[Theorem 1.1]{CLTV},  it is natural to ask whether permutation representations of finite groups can be realized within this framework, so we formulate the following problem:
\begin{question}\label{Quest:basica}
   Let  $\Bbbk$  be a field and let $G$ be a finite group. For any permutation representation  $\xi\colon G\to S_n$  with abelian kernel, does there exist a finite-dimensional idempotent evolution $\Bbbk$-algebra $X$ such that $\Aut(X)\cong G$, and the induced permutation representation $\rho_X: \Aut(X) \rightarrow S_n$  in Equation \eqref{eq:intro} is equivalent to $\xi$?
\end{question}
Our first result shows that the answer to Question \ref{Quest:basica} may be negative. Specifically,  a high transitivity of the subgroup $\rho_X\big(\Aut(X)\big)\leq S_n$ (see Definition \ref{def:transitive}) imposes restrictions on the behavior of $\rho_X$:

\begin{thmx}\label{thmx:main_A}
   Let $\Bbbk$ be a field and let $X$ be an $n$-dimensional idempotent evolution $\Bbbk$-algebra.
    \begin{enumerate}[label={\rm (\roman{*})}]   
        \item\label{thmx:main_A.1} For $n\geq 3$, if $\rho_X\big(\Aut(X)\big)\leq S_n$ is $2$-transitive then $\rho_X$ is faithful, that is $\Aut(X)\cong \rho_X\big(\Aut(X)\big).$ 
        
        \item\label{thmx:main_A.2} For $n\geq 5$, if $\rho\big(\Aut(X)\big)\leq S_n$ is $4$-transitive then $\rho_X$ is faithful and full, that is $\rho_X$ induces an isomorphism $\Aut(X)\cong S_n.$\
    \end{enumerate}
\end{thmx}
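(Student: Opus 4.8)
We would read both statements off the structure matrix. Fix a natural basis $B=\{b_1,\dots,b_n\}$ with $b_i^2=\sum_k\omega_{ki}b_k$ and put $M=(\omega_{ki})$; since $X$ is idempotent $M$ is invertible, so has no zero row and no zero column. Write $H:=\rho_X(\Aut(X))$. The kernel $D=\ker\rho_X$ consists of the automorphisms fixing every line $\langle b_i\rangle$, i.e.\ those of the form $b_i\mapsto\lambda_ib_i$ with $\lambda_i\in\Bbbk^{*}$, and $f(b_i^2)=f(b_i)^2$ makes such an $f$ an automorphism exactly when $\lambda_k=\lambda_i^{2}$ whenever $\omega_{ki}\ne0$. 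The same computation for a general $f\in\Aut(X)$ with $\rho_X(f)=\sigma$ and $f(b_i)=\mu_ib_{\sigma(i)}$ yields the identity we would use throughout,
\begin{equation}\label{eq:plan-A}
\omega_{ki}\,\mu_k=\mu_i^{2}\,\omega_{\sigma(k)\sigma(i)}\qquad(1\le i,k\le n);
\end{equation}
in particular $H$ preserves the support relation $R=\{(k,i):\omega_{ki}\ne0\}$.

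For \ref{thmx:main_A.1}: as $H$ is $2$-transitive it is transitive on indices and on ordered pairs of distinct indices, so, $R$ being $H$-invariant, either $\omega_{ii}=0$ for all $i$ or $\omega_{ii}\ne0$ for all $i$, and likewise all off-diagonal entries of $M$ vanish or all are nonzero. If some $\omega_{ii}\ne0$, then all are, and $\lambda_i=\lambda_i^{2}$ forces $\lambda_i=1$ for every $i$. Otherwise the diagonal vanishes, whence (as $M\ne0$) every off-diagonal entry is nonzero and $(\lambda_i)\in D$ satisfies $\lambda_k=\lambda_i^{2}$ for all $k\ne i$; here $n\ge3$ is used to see first that the $\lambda_i$ coincide (compare $\lambda_{k_1}$ and $\lambda_{k_2}$ through a third index) and then that $\lambda=\lambda^{2}$, so $\lambda=1$. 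Either way $D=1$, i.e.\ $\rho_X$ is faithful.

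For \ref{thmx:main_A.2}: a $4$-transitive group of degree $n\ge5$ is $2$-transitive, so \ref{thmx:main_A.1} gives faithfulness; then each $\sigma\in H$ has a \emph{unique} lift $\hat\sigma\in\Aut(X)$, $\hat\sigma(b_i)=\mu_i(\sigma)b_{\sigma(i)}$, the map $\sigma\mapsto\hat\sigma$ is a homomorphism, and the $\mu_i(\sigma)$ obey \eqref{eq:plan-A} and the cocycle relation $\mu_i(\sigma\tau)=\mu_i(\tau)\,\mu_{\tau(i)}(\sigma)$. By the dichotomy of \ref{thmx:main_A.1}, either $M$ is diagonal --- in which case rescaling the $b_i$ to idempotents gives $X\cong\Bbbk^{n}$, whose automorphisms are the $n!$ permutations of the primitive idempotents, so $\rho_X$ is onto $S_n$ and we are done --- or every off-diagonal entry of $M$ is nonzero. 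In the latter case the plan is to show the cocycle $\mu$ is a coboundary: after replacing $B$ by a rescaled natural basis $\tilde b_i=c_ib_i$ ($c_i\in\Bbbk^{*}$), every $\hat\sigma$ acts by the bare permutation $\tilde b_i\mapsto\tilde b_{\sigma(i)}$. Granting this, the new structure matrix $\tilde M$ satisfies $\tilde\omega_{\sigma(k)\sigma(i)}=\tilde\omega_{ki}$ for all $\sigma\in H$ by \eqref{eq:plan-A}, so $2$-transitivity forces $\tilde M=(\alpha-\beta)I+\beta J$ with $J$ the all-ones matrix; this is invariant under all of $S_n$, so $\tilde b_i\mapsto\tilde b_{\pi(i)}$ is an automorphism for every $\pi\in S_n$, $\rho_X$ is onto $S_n$, and with the faithfulness from \ref{thmx:main_A.1} we obtain $\Aut(X)\cong S_n$.

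The coboundary claim is the step I expect to be the main obstacle, and it is where $4$-transitivity (and $n\ge5$) is genuinely used. First, feeding into \eqref{eq:plan-A} an element $\sigma$ fixing three indices $i,j,k$ and using that $\omega_{ki},\omega_{ji},\omega_{kj}$ are nonzero, one gets $\mu_k(\sigma)=\mu_i(\sigma)^2$, $\mu_i(\sigma)=\mu_k(\sigma)^2$ and the analogous relations with $j$, which together force $\mu_p(\sigma)=1$ for \emph{every} fixed point $p$ of $\sigma$ --- with only two fixed points one would get merely $\mu_i(\sigma)^3=1$, which is exactly why three are needed. Second, $\mu_1|_{H_1}\colon H_1\to\Bbbk^{*}$ is a homomorphism, and $H_1$ is generated by elements fixing at least three indices: a $2$-transitive group of degree $\ge3$ is generated by its point stabilizers (they are all conjugate, so generate a nontrivial normal subgroup, which is transitive by Burnside's theorem and, containing a point stabilizer, is the whole group), and applying this first to $H_1$ ($3$-transitive of degree $n-1$) and then to each $H_{1,b}$ ($2$-transitive of degree $n-2\ge3$, where $n\ge5$ enters) exhibits $H_1=\langle H_{1,b,c}\rangle$, each $H_{1,b,c}$ fixing $1,b,c$. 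Since $\mu_1$ is $1$ on every such generator, $\mu_1|_{H_1}\equiv1$; this is the coboundary condition, and one then sets $c_1=1$, $c_j:=\mu_1(\sigma)$ for any $\sigma$ with $\sigma(1)=j$ (well defined), and verifies through the cocycle relation that $\mu_i(\sigma)=c_{\sigma(i)}/c_i$ for all $i,\sigma$, so that $\tilde b_i:=c_ib_i$ is the desired natural basis. The remaining ingredients --- the case distinction in \ref{thmx:main_A.1}, the identification $X\cong\Bbbk^{n}$, and the collapse of $\tilde M$ --- are routine.
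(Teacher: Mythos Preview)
Your argument is correct. Part~(i) is essentially the paper's proof: the paper packages the same dichotomy (all diagonal entries nonzero versus all off-diagonal entries nonzero) into two lemmas giving explicit formulae for the $\lambda_i$ in terms of $\sigma$ and the structure constants, whereas you compute $\ker\rho_X$ directly; the content is the same.

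Part~(ii) is where the approaches genuinely diverge. The paper works by \emph{direct construction}: for an arbitrary $\tau\in S_n$ it writes down candidate scalars $\lambda_i^\tau=B(i,j,k,\tau)$ using the explicit formula from part~(i), and then checks well-definedness and the functional equation by using $4$-transitivity to find, for any four indices, some $\sigma\in H$ agreeing with $\tau$ there and borrowing the identities that hold for $\sigma$. You instead run a \emph{cohomological} argument: the unique lifts $\hat\sigma$ assemble into a $1$-cocycle $\mu\colon H\to(\Bbbk^{*})^{n}$, and you prove it is a coboundary by showing $\mu_1|_{H_1}\equiv 1$, which follows because $H_1$ is generated by three-point stabilisers (this is where $4$-transitivity and $n\ge 5$ enter for you) and $\mu_1$ vanishes on those. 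After trivialising the cocycle the structure matrix becomes $H$-invariant, hence of the form $(\alpha-\beta)I+\beta J$, and surjectivity onto $S_n$ is immediate. The paper's route is more hands-on and yields explicit formulae for the lifting scalars with no appeal to cohomology; your route is cleaner conceptually, isolates exactly why $4$-transitivity is needed (two fixed points give only $\mu_i^{3}=1$), and makes the endgame a one-line symmetry observation. One minor remark: the fact you invoke as ``Burnside's theorem'' is really the elementary statement that a nontrivial normal subgroup of a primitive group is transitive; the argument is fine, but the attribution is loose.
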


In \cite{Sriwongsa-Zou},  it is claimed that $n$-dimensional idempotent evolution $\Bbbk$-algebras can be classified based on the isomorphism type of their automorphism group. 
Our result contributes to this classification problem by providing the following characterization, which complements \cite[Theorem 3.2]{Sriwongsa-Zou}:

\begin{corx}\label{corx:trans}
 Let  $\Bbbk$  be a field and let $G\leq S_n$ be a $k$-transitive group, $k\geq 4$. Then, $G$ is the automorphism group of an  $n$-dimensional idempotent evolution $\Bbbk$-algebra if and only if $G=S_n$. 
\end{corx}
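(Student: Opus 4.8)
The plan is to treat the two implications separately, the reverse one being essentially free. For $(\Leftarrow)$, when $G=S_n$ we may take $X=\Bbbk\times\cdots\times\Bbbk$ ($n$ factors), i.e.\ the evolution algebra with natural basis $\{e_1,\dots,e_n\}$ and $e_ie_j=\delta_{ij}e_i$. Its structure matrix is the identity, so $X^2=X$ and $\dim X=n$; its primitive idempotents are exactly the $e_i$, so every automorphism permutes them, and since $\lambda_i^2=\lambda_i$ forces $\lambda_i=1$ the diagonal subgroup $D$ of \eqref{eq:intro} is trivial, whence $\Aut(X)=S_n$. (Alternatively one applies the general realization theorem with $\xi=\id_{S_n}$.) Small $n$ is disposed of at once: if $n\le 4$ the only $k$-transitive subgroup of $S_n$ with $k\ge 4$ is $S_4$ (and only for $n=4$), so the statement is trivial; assume henceforth $n\ge 5$.

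For $(\Rightarrow)$, let $X$ be $n$-dimensional idempotent with $\Aut(X)\cong G$, and suppose towards a contradiction that $G\ne S_n$. By the classification of $4$-transitive finite permutation groups, $G$ is then $A_n$ (with $n\ge 6$) or one of the Mathieu groups $M_{11},M_{12},M_{23},M_{24}$ in its natural degree-$n$ action; in particular $G$ is simple or has $A_n$ as its unique minimal normal subgroup, so $G$ has no nontrivial abelian normal subgroup. Since $\dim X=n$, the kernel $D$ of $\rho_X$ is abelian by \eqref{eq:intro} and, via the isomorphism $\Aut(X)\cong G$, corresponds to an abelian normal subgroup of $G$; hence $D=1$, $\rho_X$ is injective, and $H:=\rho_X(\Aut(X))\le S_n$ is a subgroup abstractly isomorphic to $G$.

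It remains to see that $H$ is $4$-transitive, so that Theorem~\ref{thmx:main_A}\ref{thmx:main_A.2} may be invoked. As $|H|=|G|$: if $G=A_n$ then $H$ has index $2$ in $S_n$, hence equals the natural $A_n$, which is $(n-2)$-transitive; if $G$ is a Mathieu group, then $H$ is a faithful permutation representation of the simple group $G$ of degree $n$ equal to its minimal faithful permutation degree, so by simplicity every orbit is faithful and $H$ is transitive, and the transitive actions of the Mathieu groups in their minimal degree are precisely the $k$-transitive ($k\ge 4$) ones. Hence $H$ is $4$-transitive, Theorem~\ref{thmx:main_A}\ref{thmx:main_A.2} gives $\Aut(X)\cong S_n$, and comparing orders with $\Aut(X)\cong G$ is absurd. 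Therefore $G=S_n$.

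The step I expect to be the real obstacle is the passage from ``$\Aut(X)\cong G$ abstractly'' to ``$H=\rho_X(\Aut(X))$ is $4$-transitive'': a priori $\Aut(X)$ could embed into $S_n$ through a permutation action that is not even $2$-transitive, and Theorem~\ref{thmx:main_A} is unavailable until this is excluded. This is what forces the appeal to the classification of highly transitive finite permutation groups together with the rigidity of the minimal faithful permutation representations of the groups on that list. (If instead the statement is read with $\rho_X$ equivalent to the prescribed inclusion $G\hookrightarrow S_n$, then $H=G$ is $4$-transitive by hypothesis and the conclusion is immediate from Theorem~\ref{thmx:main_A}\ref{thmx:main_A.2} alone.)
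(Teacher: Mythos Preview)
Your proof is correct and follows essentially the same route as the paper's: both argue that the kernel of $\rho_X$, being abelian and normal in a group isomorphic to a $4$-transitive $G$, must be trivial (since the classification shows such $G$ have no nontrivial abelian normal subgroups), and then apply Theorem~\ref{thmx:main_A}\ref{thmx:main_A.2} once $\rho_X(\Aut(X))$ is shown to be $4$-transitive. The only organizational difference is that the paper isolates the step you flag as the ``real obstacle'' into a separate lemma (Lemma~\ref{lem:unique_4_trans}: any monomorphic image in $S_n$ of a $4$-transitive $P\le S_n$ is again $4$-transitive, proved by the same case analysis---uniqueness of $A_n$ as an index-$2$ subgroup, and inspection of the degree-$n$ permutation representations of the Mathieu groups via the ATLAS), whereas you carry out that case analysis inline using the index-$2$ and minimal-faithful-degree arguments. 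Your explicit $(\Leftarrow)$ construction $X=\Bbbk^n$ is also fine; the paper leaves this direction implicit.
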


Permutation representations of the full automorphism group of a given finite-dimensional idempotent evolution $\Bbbk$-algebra $X$ can be constructed in ways other than described in Equation \eqref{eq:intro}. Specifically, let $\widetilde{\B}_X$ be the set of all the idempotent natural elements of $X$ (see Definition \ref{def:natural_elements}).  Since $\Aut(X)$ acts naturally on $\widetilde{\B}_X$, it induces a permutation representation
\begin{equation}\label{eq:intro2}
    \widetilde{\rho}_X: \Aut(X) \rightarrow S_m
\end{equation}
where $m=|\widetilde{B}_X|$. Hence,  similarly to Question \ref{Quest:basica}, we raise the following problem:

\begin{question}\label{Quest:basica2}
   Let  $\Bbbk$ be a field and let $G$ be a finite group. For any permutation representation $\xi\colon G\to S_m$ , does there exist a finite-dimensional idempotent evolution $\Bbbk$-algebra $X$  such that $\Aut(X)\cong G$, and the induced permutation representation $\widetilde{\rho}_X: Aut(X) \rightarrow S_m$  in Equation \eqref{eq:intro2} is equivalent to $\xi$?
\end{question}

Unlike Question \ref{Quest:basica},  we solve Question \ref{Quest:basica2} affirmatively: 

\begin{thmx}\label{thmx:main_C}
Let $\K$ be a field and $G$ be a finite group. For any  permutation representation $\xi\colon G\to S_m$,  there exist infinitely many non isomorphic finite-dimensional idempotent evolution $\K$-algebras $X$ such that $\Aut(X)\cong G$, and the induced permutation representation $\widetilde{\rho}_X$  in Equation \eqref{eq:intro2} is equivalent to $\xi$.
\end{thmx}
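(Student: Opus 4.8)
The plan is to translate the $G$-set defined by $\xi$ into the combinatorics of a finite directed graph with loops, graft it onto a rigid ``skeleton'' realizing $G$, and pass to the associated evolution algebra. First decompose the $G$-set underlying $\xi$ into transitive pieces, fixing an ordering $\{1,\dots,m\}\cong G/H_1\sqcup\cdots\sqcup G/H_r$. The skeleton is a digraph $\Delta$ produced by the construction of \cite{CLTV} (which works over an arbitrary field): its associated evolution $\K$-algebra $Y$ is idempotent with $\Aut(Y)\cong G$; moreover we may take $\Delta$ with a loop at every vertex — so the abelian part $D$ of Equation \eqref{eq:intro} is trivial and $\rho_Y$ is faithful — with every vertex of out-degree $\ge 2$, and containing a free $G$-orbit $\{v_g:g\in G\}$ (the construction being of Frucht type).

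Next I would build $\Gamma_N\supseteq\Delta$ as follows. For each $i$ and each coset $gH_i$ add a new vertex $o^{\,i}_{gH_i}$ carrying a single loop and no other out-edge; attach it to the skeleton by a directed thread $v_g\to w^{\,i}_{gH_i,1}\to\cdots\to w^{\,i}_{gH_i,i}\to o^{\,i}_{gH_i}$ whose $i$ internal vertices $w^{\,i}_{gH_i,k}$ each bear a loop, so each has out-degree $\ge 2$. The thread-length $i$ acts as a serial number that distinguishes the $i$-th transitive summand even from an isomorphic one. Finally, for the ``infinitely many'' clause, attach to each $v_g$ a copy of a rooted-rigid gadget $R_N$ — a digraph on $N$ vertices, each with a loop and out-degree $\ge 2$, with $\Aut(R_N)=1$ fixing a distinguished root and with invertible structure matrix over $\K$; such gadgets exist for arbitrarily large $N$ (e.g.\ by concatenating copies of a fixed small one) and are glued equivariantly via edges $v_g\to\mathrm{root}(R_N)$. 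Let $X_N$ be the evolution $\K$-algebra on the basis indexed by the vertices of $\Gamma_N$, with structure constants chosen (as in \cite{CLTV}) so that $X_N^2=X_N$: ordering the vertices as skeleton, then $R_N$-vertices, then thread vertices by distance to the $o$-vertices, then $o$-vertices, its structure matrix becomes block upper triangular with invertible diagonal blocks, so idempotency indeed holds.

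It then remains to verify: (a) $\Aut(X_N)\cong G$; (b) the natural idempotents of $X_N$ are exactly the scalar multiples of the $b_{o^{\,i}_{gH_i}}$; and (c) the induced $\widetilde{\rho}_{X_N}$-action on them is $\bigsqcup_i G/H_i$, hence equivalent to $\xi$. For (a): the loops make $D$ trivial, so $\Aut(X_N)$ embeds in the digraph automorphism group of $\Gamma_N$; the loop-and-nothing-else vertices are precisely the $o$-vertices and so are preserved setwise, while the threads and the $R_N$-copies are recognizable from local structure; hence any automorphism restricts to $\Delta$, giving an element of $\Aut(\Delta)\cong G$, and this element determines the automorphism everywhere else (the incoming edges from the free orbit pin the action on the $o$-vertices, rooted-rigidity pins it on the gadgets); conversely the $G$-action on $\Delta$ extends to $\Gamma_N$. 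For (b): the natural basis of an idempotent finite-dimensional evolution algebra is essentially unique (\cite{Elduque-Labra-2019, Cabrera-Siles-Velasco}), so any natural idempotent is a scalar multiple $\lambda b_j$ with $b_j^2\in\K^{\ast}b_j$, forcing vertex $j$ to be loop-and-nothing-else, i.e.\ an $o$-vertex; and each $o$-vertex does yield one. Part (c) is then immediate, and since $\dim X_N$ is strictly increasing in $N$, the $X_N$ are pairwise non-isomorphic.

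The main obstacle I anticipate is the joint tension inside (a)--(b): the decoration must be rigid enough to encode the stabilizers $H_i$ and to separate isomorphic transitive summands — so no spurious automorphism appears — yet it must keep the decorated vertices loop-and-nothing-else (hence genuine natural idempotents) and introduce neither an extra natural idempotent nor a vanishing row that would destroy $X_N^2=X_N$ over a small field. The relay-thread device is tailored to resolve exactly this conflict; by contrast the triviality of $D$ (from the ubiquitous loops) and idempotency over an arbitrary $\K$ (block-triangularity plus the skeleton block from \cite{CLTV}) should be comparatively routine.
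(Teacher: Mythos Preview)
Your overall strategy---realize $G$ as the automorphism group of a combinatorial object carrying a distinguished $G$-invariant subset on which the action is $\xi$, then pass to an evolution algebra whose natural idempotents are indexed by that subset---is the one the paper follows. The paper, however, does not build the gadget by hand: it invokes \cite[Theorem~2.1]{CGV-PermMod} (restated as Theorem~\ref{thm:CGV}), which directly furnishes infinitely many finite simple graphs $\G$ with $\Aut(\G)\cong G$, a $G$-invariant subset $V\subset V(\G)$, and restriction to $V$ equal to $\xi$. The algebra is then defined on the basis $\{b_v:v\in V(\G)\}\cup\{b_e:e\in E(\G)\}$ with $b_v^2=b_v$ for $v\in V$, $b_v^2=b_v+\sum_{w\in V}b_w$ otherwise, and $b_e^2=b_e+b_u+b_w$ for $e=\{u,w\}$; the structure matrix is unitriangular (so idempotent over any $\K$), and a direct check gives $\Aut(X)\cong\Aut(\G)$ with $\widetilde{\B}_X=\{b_v:v\in V\}$. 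By outsourcing the graph theory, the paper avoids your thread-length serial numbers, rigid gadgets $R_N$, and the problem of distinguishing skeleton from decoration.

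Your explicit construction has a genuine gap at the thread attachment. As written you attach one thread per coset $gH_i$, starting at a \emph{single} vertex $v_g$ of the free orbit, with $g$ a chosen representative. But then the $G$-action on $\Delta$ does not extend to $\Gamma_N$: for $h\in G$ the image $h\cdot v_g=v_{hg}$ must also be the start of a thread, hence $hg$ must again be one of the chosen representatives, and this fails for general $h$ unless $H_i=\{e\}$. So $\Aut(\Gamma_N)$ can be strictly smaller than $G$, and part~(a) collapses. The repair is to draw edges $v_{g'}\to w^{\,i}_{gH_i,1}$ for \emph{every} $g'\in gH_i$ (or, equivalently, one thread per element of $G$ with all threads from the same coset sharing their terminal $o$-vertex); your later plural ``incoming edges from the free orbit'' hints that you may have intended this. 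With that fix---and with an actual invariant separating skeleton vertices from $R_N$-vertices, which you assert but do not supply---the argument can be completed, though it remains substantially more laborious than the paper's route.
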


\subsection*{Organization of this paper}
We start by giving an introduction to evolution algebras (Section \ref{sec:evol_alg}) and multiply-transitive groups (Section \ref{sec:trans_groups}). 
In Section \ref{sec:Proof_A} we prove Theorem \ref{thmx:main_A} and Corollary \ref{corx:trans}. The final section is dedicated to proving  Theorem \ref{thmx:main_C}, presented there as Theorem \ref{thm:repres}.

\subsection*{Funding}
This work was partially supported by MCIN/AEI/10.13039/501100011033 [PID2020-115155GB-I00 to C.C.,  PID2020-118753GB-I00 to P.M.\ and A.V., and PID2023-149804NB-I00 to C.C. and A.V.]. The third author was also partially supported by PAIDI 2020 (Andalusia) grant PROYEXCEL-00827.

%%%%%%%%%%%%%%%%%%%%%%%%%%%%
%%%%%%%%%%% SECTION 2 %%%%%%%%%%%
%%%%%%%%%%%%%%%%%%%%%%%%%%%%
\section{Basics of evolution algebras}\label{sec:evol_alg}

In this section we review the necessary background on evolution algebras that  is needed throughout this article. We refer  to the monograph by Tian \cite{Tian} for a thorough description on evolution algebras.
However, most of our arguments are based on the results presented in \cite{Elduque-Labra-2015, Elduque-Labra-2019}.

All the algebras considered in this work will be defined over a  field $\K$ of arbitrary characteristic and have finite dimension. A $\K$-algebra is just a vector space $X$ endowed with a $\K$-bilinear map (multiplication) $X \times X \rightarrow X$ given by  $(x, y) \mapsto xy$. 

\begin{definition}\label{def:evol_alg} An $n$-dimensional evolution algebra is a $\K$-algebra $X$, provided with a basis $\B=\{b_1, \ldots, b_n\}$ satisfying $b_ib_j=0$ for any $1 \leq i \neq j \leq n$. Such a basis is referred to as a natural basis.
\end{definition}

Evolution algebras are non-associative and commutative \cite[Corollary 1]{Tian}. Although natural basis are not unique (see \cite[Examples 2.4 and 2.5]{Elduque-Labra-2015}),  once a natural basis is chosen, the squares of its elements fully determine the structure of the algebra. This fact motivates the following definition:

\begin{definition}\label{def:struc_matrix} Let $X$ be an evolution $\K$-algebra with natural basis $\B=\{b_1, \ldots, b_n\}.$ The structure matrix of $X$ relative to $\B$ is defined as $M_\B  := (\mu_{ij})$ where $b_i^2=\sum_{j=1}^n\mu_{ij}b_j$, for $i=1, \ldots, n$. The coefficients $\mu_{ij} \in \K, \,  i, j = 1, \ldots, n,$ are referred to as structure constants. 
\end{definition}
It is worth noting that some authors adopt a different notation for the structure constants, resulting in a structure matrix that is the transpose of ours.
We will focus on a particularly special class of evolution algebras:
\begin{definition}\label{def:idempotent_alg}
An evolution $\K$-algebra $X$ is idempotent (or regular, or perfect) 
if  $X^2=X$.
\end{definition}
A classical argument in linear algebra shows that an evolution algebra is idempotent if and only if its structure matrix, with respect to any natural basis, is invertible \cite[Remark 4.2]{Elduque-Labra-2015}. Idempotent evolution algebras are particularly convenient since they possess a unique natural basis, up to scaling and reordering of its elements:
\begin{theorem}[{\cite[Theorem~4.4]{Elduque-Labra-2015}}]\label{thm:unique_basis}
Let $X$ be an $n$-dimensional idempotent evolution $\K$-algebra and let $\B=\{b_1, \ldots, b_n\}$  and $\B'=\{b'_1, \ldots, b'_n\}$ be natural bases of $X$. Then,  there exist nonzero  scalars $\lambda_i \in \K$, and a permutation $\sigma \in S_n$, such that for $i=1, \ldots, n$, $b'_i=\lambda_i b_{\sigma(i)}$.\end{theorem}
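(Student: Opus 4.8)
The plan is to exploit the structural rigidity of idempotent evolution algebras, namely the fact that their structure matrix is regular (invertible). Let $\B=\{b_i\mid i\in I\}$ and $\B'=\{b'_j\mid j\in I\}$ be two natural bases, and write $b'_j=\sum_{i\in I}p_{ji}b_i$ for the (invertible) change-of-basis matrix $P=(p_{ji})$. The first step is to encode the natural-basis condition $b'_jb'_k=0$ for $j\neq k$ in terms of $P$. Since $b_ib_\ell=0$ for $i\neq\ell$, expanding gives $b'_jb'_k=\sum_{i\in I}p_{ji}p_{ki}\,b_i^2$. Using the structure matrix $M_\B=(\mu_{i\ell})$, i.e.\ $b_i^2=\sum_{\ell}\mu_{i\ell}b_\ell$, this becomes $b'_jb'_k=\sum_{\ell\in I}\bigl(\sum_{i\in I}p_{ji}p_{ki}\mu_{i\ell}\bigr)b_\ell$. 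Hence the condition $b'_jb'_k=0$ for all $j\neq k$ says that for every $j\neq k$ the vector $(p_{ji}p_{ki})_{i\in I}$ lies in the kernel of $M_\B$ (acting on row vectors). But $X$ is idempotent, so by \cite[Remark 4.2]{Elduque-Labra-2015} the matrix $M_\B$ is regular, so its kernel is trivial; therefore $\sum_i p_{ji}p_{ki}\mu_{i\ell}=0$ forces $p_{ji}p_{ki}=0$ for all $i$, whenever $j\neq k$.

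The second step is a combinatorial argument on the supports of the rows of $P$. Let $S_j=\{i\in I: p_{ji}\neq 0\}=\supp(\text{$j$-th row of }P)$. The relation $p_{ji}p_{ki}=0$ for $j\neq k$ says exactly that $S_j\cap S_k=\emptyset$ whenever $j\neq k$, i.e.\ the sets $\{S_j\}_{j\in I}$ are pairwise disjoint. On the other hand $P$ is invertible, so no row is zero, i.e.\ each $S_j$ is nonempty; and since $I$ is finite with $|I|$ rows and the nonempty sets $S_j\subseteq I$ are pairwise disjoint, a counting argument forces each $S_j$ to be a singleton, say $S_j=\{\sigma(j)\}$ for some function $\sigma\colon I\to I$. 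Disjointness of the $S_j$ then makes $\sigma$ injective, hence (finiteness again) a permutation $\sigma\in\operatorname{Sym}(I)$. Consequently $P$ is a monomial matrix: $b'_j=\lambda_j b_{\sigma(j)}$ with $\lambda_j=p_{j\sigma(j)}\neq 0$.

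Finally one should double-check that $b'_j=\lambda_jb_{\sigma(j)}$ is genuinely consistent with $\B'$ being a basis (immediate, since $\sigma$ is a permutation and the $\lambda_j$ are nonzero) and reconcile indices so the statement reads with the same index set, reindexing $\sigma$ if necessary. I expect the main obstacle to be handling the infinite-dimensional bookkeeping carefully—though here $X$ is assumed finite dimensional, so the counting step ``pairwise disjoint nonempty subsets of a finite set, as many as the set's cardinality, are singletons'' is clean—so in fact the only genuinely delicate point is justifying that $M_\B$ regular kills the vectors $(p_{ji}p_{ki})_i$, which is exactly the cited \cite[Remark 4.2]{Elduque-Labra-2015} together with the observation that right-multiplication by a regular matrix is injective on row vectors.
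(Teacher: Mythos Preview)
The paper does not include its own proof of this statement; it is simply quoted from \cite[Theorem~4.4]{Elduque-Labra-2019}. Your argument is correct and is essentially the standard one: expand $b'_jb'_k=\sum_i p_{ji}p_{ki}\,b_i^2$, use regularity of $M_\B$ to conclude $p_{ji}p_{ki}=0$ for all $i$ whenever $j\ne k$, and then a pigeonhole argument on the pairwise disjoint nonempty row supports of the invertible matrix $P$ forces $P$ to be monomial.
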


This leads to a characterization of the automorphisms of finite-dimensional idempotent evolution algebras:

\begin{corollary}\label{cor:cor} 
Let $X$ be an $n$-dimensional  idempotent evolution $\K$-algebra, let $\B=\{b_1, \ldots, b_n\}$  be a natural basis of $X$ and $\varphi \in\Aut(X)$. Then, there exist nonzero scalars $\lambda_i \in \K$ and a permutation $\sigma \in S_n$ such that $\varphi(b_i)=\lambda_i b_{\sigma(i)}$,  $i=1, \ldots, n$.
\end{corollary}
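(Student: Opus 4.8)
The plan is to deduce Corollary~\ref{cor:cor} directly from Theorem~\ref{thm:unique_basis}. The key observation is that an automorphism, being in particular a linear isomorphism commuting with the product, carries a natural basis to a natural basis.

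\medskip

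\noindent\textbf{Step 1: The image of a natural basis under an automorphism is a natural basis.}
Let $\varphi\in\Aut(X)$ and set $b'_i:=\varphi(b_i)$ for $i\in I$. Since $\varphi$ is a linear isomorphism, $\B'=\{b'_i\mid i\in I\}$ is again a basis of $X$. Moreover, for $i\neq j$ we have $b'_ib'_j=\varphi(b_i)\varphi(b_j)=\varphi(b_ib_j)=\varphi(0)=0$, using that $\varphi$ is an algebra morphism and that $\B$ is a natural basis. Hence $\B'$ satisfies the defining condition of Definition~\ref{def:evol_alg}, so it is a natural basis of $X$.

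\medskip

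\noindent\textbf{Step 2: Apply the uniqueness of the natural basis.}
Since $X$ is a finite dimensional idempotent evolution $\K$-algebra, Theorem~\ref{thm:unique_basis} applies to the pair of natural bases $\B$ and $\B'$: there exist non-zero scalars $\lambda_i\in\K$ and a permutation $\sigma\in\operatorname{Sym}(I)$ with $b'_i=\lambda_i b_{\sigma(i)}$. Unwinding the definition $b'_i=\varphi(b_i)$ gives $\varphi(b_i)=\lambda_i b_{\sigma(i)}$, which is exactly the claimed description of $\varphi$. This completes the proof.

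\medskip

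There is essentially no obstacle here: the only subtlety worth stating explicitly is that $\varphi$ being an algebra isomorphism is used twice — once (via linearity) to ensure $\B'$ is still a basis, and once (via multiplicativity) to ensure the vanishing of cross products $b'_ib'_j$ — after which Theorem~\ref{thm:unique_basis} does all the work. One could alternatively note that the statement is really just the specialization of Theorem~\ref{thm:unique_basis} along the bijection $\B\mapsto\varphi(\B)$ induced by any automorphism.
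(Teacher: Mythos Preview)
Your proof is correct and matches the paper's approach: the paper states this corollary immediately after Theorem~\ref{thm:unique_basis} with the sentence ``This gives rise to a nice description of the automorphisms of any idempotent evolution algebra'' and no further argument, so the deduction you spell out (the image of a natural basis under an automorphism is again a natural basis, then apply Theorem~\ref{thm:unique_basis}) is exactly the intended one.
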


\begin{remark}\label{rmk:equivalent_representations}
    If $X$ is an idempotent evolution $\K$-algebra with natural basis $\B=\{b_1,\ldots,b_n\}$, Corollary \ref{cor:cor} allows us to map each $\varphi \in \Aut(X)$ to a permutation $\rho_{(X, \B)}(\varphi) := \sigma \in S_n$. This map is a group homomorphism and corresponds to the permutation representation of $\Aut(X)$ in Equation \eqref{eq:intro}. Notice that $\rho_{(X, \B)} $ depends on the chosen natural basis $\B$, but if $\B'$ is a different basis, there exists a permutation $\tau \in S_n$ such that $\rho_{(X, \B')}(\varphi) = \tau^{-1} \cdot \rho_{(X, \B)}(\varphi) \cdot \tau$. Therefore, the representations $\rho_{(X, \B)}$ and $\rho_{(X, \B')}$ are equivalent  (see \cite[p.\ 35]{Robinson}) and we adopt the simplified notation $\rho_X$ for the representation, ignoring the choice of basis.
\end{remark}

We now proceed to define the permutation representation $\widetilde{\rho}_X$ in Equation \eqref{eq:intro2}. We first define natural elements in an evolution algebra. 

\begin{definition}[{\cite[Definitions 2.2]{natural_element}}]\label{def:natural_elements}
Let $X$ be an evolution $\K$-algebra. A set 
$\A=\{x_1,\ldots, x_r\}\subset X$ is called an extending natural family if there exists a natural basis $\B$  of $X$ such that $\A\subset\B$.  We say that $x \in X$ is a natural element if $\{x\}$ is an extending natural
family. 
\end{definition}

A very special example of an extending natural family is the set of all idempotent natural elements in an idempotent evolution algebra:

\begin{proposition}\label{prop:idemp_natural}
Let $X$ be an $n$-dimensional idempotent evolution $\K$-algebra. The set of all idempotent natural elements in $X$, denoted $\widetilde{\B}_X$, forms an extending natural family. Specifically, $\widetilde{\B}_X$ is a finite set with size $m := |\widetilde{\B}_X| \leq n$.
\end{proposition}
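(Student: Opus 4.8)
The plan is to fix one natural basis $\B=\{b_1,\dots,b_n\}$ of $X$, with structure matrix $M_\B=(\mu_{ij})$, and to exploit the rigidity supplied by Theorem \ref{thm:unique_basis} in order to describe all natural elements explicitly. The first step is to prove that $x\in X$ is a natural element if and only if $x=\lambda b_k$ for some $k\in\{1,\dots,n\}$ and some nonzero $\lambda\in\K$. Indeed, if $x$ is natural then by Definition \ref{def:natural_elements} it lies in some natural basis $\B'$ of $X$, and by Theorem \ref{thm:unique_basis} we have $\B'=\{\lambda_i b_{\sigma(i)}\}$ for suitable nonzero scalars $\lambda_i$ and $\sigma\in\operatorname{Sym}(I)$, so $x$ is a nonzero multiple of some $b_k$. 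Conversely, replacing a single element of $\B$ by a nonzero scalar multiple of it evidently yields another basis for which the products of distinct elements still vanish, hence another natural basis; so each such $\lambda b_k$ is natural. (This is the only place where idempotence of $X$ is used, via Theorem \ref{thm:unique_basis}.)

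The second step is to single out the idempotent ones. Writing $b_k^2=\sum_{j}\mu_{kj}b_j$, the element $x=\lambda b_k$ with $\lambda\neq 0$ satisfies $x^2=x$ exactly when $\lambda^2\sum_j\mu_{kj}b_j=\lambda b_k$; comparing coordinates in $\B$ this forces $\mu_{kj}=0$ for all $j\neq k$ and $\lambda\mu_{kk}=1$. Thus a scalar multiple of $b_k$ is an idempotent natural element if and only if $b_k^2\in\K^* b_k$, and in that case it is unique, namely $\mu_{kk}^{-1}b_k$. Setting $S=\{k : b_k^2\in\K^* b_k\}\subseteq\{1,\dots,n\}$, we therefore obtain $\widetilde{\B}_X=\{\mu_{kk}^{-1}b_k : k\in S\}$, and in particular $\widetilde{\B}_X$ is finite with $|\widetilde{\B}_X|=|S|\leq n$.

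Finally, to see that $\widetilde{\B}_X$ is an extending natural family I would exhibit the required natural basis directly: put $\B^\star=\{\mu_{kk}^{-1}b_k : k\in S\}\cup\{b_j : j\notin S\}$. Since $\B^\star$ is obtained from $\B$ by rescaling some of its vectors it is again a basis of $X$, and the product of any two distinct elements of $\B^\star$ is a scalar multiple of a product $b_ib_j$ with $i\neq j$, hence zero; so $\B^\star$ is a natural basis containing $\widetilde{\B}_X$, which is exactly what is needed. I do not anticipate a genuine obstacle: once the characterization of natural elements is in place the rest is a short coordinate computation, the only point needing a little care being the "conversely" half of that characterization, which is immediate from bilinearity of the product.
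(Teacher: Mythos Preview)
Your proof is correct and follows essentially the same approach as the paper: both use Theorem~\ref{thm:unique_basis} to show any natural element is a nonzero scalar multiple of some fixed $b_k$, then identify the idempotent ones via the condition $b_k^2\in\K^* b_k$, and finally rescale those coordinates to obtain a natural basis containing $\widetilde{\B}_X$. Your exposition is slightly more systematic in that you first characterize all natural elements (including the easy converse that scalar multiples of basis vectors are natural), whereas the paper argues the two inclusions for $\widetilde{\B}_X$ directly, but the content is the same.
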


\begin{proof}
Let $\B' = \{b'_1, \ldots, b'_n\}$ be a natural basis of $X$ with structure constants $(\mu'_{ij})$. We focus on the elements of $\B'$ that satisfy ${b'_i}^2 = \mu'_{ii} b'_i$ with $\mu'_{ii} \neq 0$. After reordering, we can assume that the first $m$ elements, $b'_1, \ldots, b'_m$, are the only ones in $\B'$ that meet this condition. If no such elements exist, we set $m = 0$.
Let
$$
b_i = 
\begin{cases} 
(\mu'_{ii})^{-1} b'_i & \text{for } i \leq m, \\
b'_i & \text{for } i > m.
\end{cases}
$$
This defines $ \B = \{b_1, \dots, b_n\} $ as a natural basis of $ X $ with structure constants $ (\mu_{ij}) $. For $ i \leq m $, 
$$ b_i^2 = (\mu'_{ii})^{-2} {b'_i}^2 = (\mu'_{ii})^{-1} b'_i = b_i, $$
which shows that $ \{b_1, \dots, b_m\} \subset \widetilde{\B}_X $. Additionally, by construction, we have that $ \mu_{ii} = 1 $, and $ \mu_{ij} = 0 $ for $ i \neq j $ if and only if $ i \leq m $.

We conclude by proving that $ \widetilde{\B}_X \subset \{b_1, \dots, b_m\} $. Indeed, for any $ x \in \widetilde{\B}_X $, since $ x $ is a natural element, there exists a natural basis $ \A $ such that $ x \in \A \subset X $. By Theorem \ref{thm:unique_basis}, there is a nonzero scalar $ \lambda_0 \in \K $ and an index $ i_0 \in \{1, \dots, n\} $ such that $ x = \lambda_0 b_{i_0} $. Since $ x = x^2 $, we have:
$$ \lambda_0 b_{i_0} = (\lambda_0 b_{i_0})^2 = \lambda_0^2 \sum_{j=1}^n \mu_{i_0j} b_j. $$
Comparing coefficients, we find that $ \mu_{i_0j} = 0 $ for $ j \neq i_0 $ and $ \mu_{i_0i_0} \neq 0 $. This implies that $ i_0 \leq m $, and $ \mu_{i_0i_0} = 1 $. Therefore, $ \lambda_0 = 1 $, and consequently, $ x = b_{i_0} \in \{b_1, \dots, b_m\} $.
\end{proof}

We can now describe the permutation representation $\widetilde{\rho}_X$ from Equation \eqref{eq:intro2}.
Let $X$ be an $n$-dimensional idempotent evolution $\K$-algebra, $\widetilde{\B}_X = \{b_1, \ldots, b_m\}$ the set of all idempotent natural elements in $X$, and $\B = \{b_1, \ldots, b_n\}$ the natural basis with $\widetilde{\B}_X \subset \B$, as described in the proof of Proposition \ref{prop:idemp_natural}. 

We considerer  the permutation representation $\rho_X \colon \Aut(X) \to S_n$ constructed in Remark \ref{rmk:equivalent_representations}, which appears in Equation \eqref{eq:intro}. 
Given $\varphi \in \Aut(X)$, note that $\varphi$ maps idempotent elements to idempotent elements. Thus, if $i \leq m$ (so that $b_i \in \widetilde{\B}_X$ and $b_i$ is idempotent), we have $\varphi(b_i) = b_{\rho_X(\varphi)(i)}$, where $\rho_X(\varphi)(i) \leq m$. 
This defines a new permutation representation
$$
\widetilde{\rho}_X \colon \Aut(X) \to S_m,
$$
induced by the action of $\Aut(X)$ on $\widetilde{\B}_X$, which is given by
$$
\widetilde{\rho}_X(\varphi)(i) = \rho_X(\varphi)(i) \quad \text{for } i = 1, \ldots, m.
$$
In other words, the permutation representation $\rho_X \colon \Aut(X) \to S_n$ can be decomposed as:
$$
\begin{tikzcd}
\Aut(X)  \ar[dr, "{\widetilde{\rho}_X\oplus \alpha}" ', bend right=20] \ar[rr, "{\rho}_X"]& &  S_n \\
& S_m\times \operatorname{Sym}(\{m+1,\ldots, n\}) \ar[ur, hook, bend right=20] &
\end{tikzcd}
$$

\begin{remark}\label{rmk:equivalent_idempotent_representation}
As in the case of the representation $\rho_X$ (see Remark \ref{rmk:equivalent_representations}), the definition of $\widetilde{\rho}_X$ is unique only up to equivalence.  Indeed, while the set $\widetilde{\B}_X$ is unequivocally defined, and algebra morphisms necessarily chosen forhe elements in $\widetilde{\B}_X$. Different orderings yield different representations,  however these representations agree up to inner conjugation in $S_m$. In other words, any two representations arising from different orderings of $\widetilde{\B}_X$ are equivalent.

\end{remark}

%%%%%%%%%%%%%%%%%%%%%%%%%%%%
%%%%%%%%%%% SECTION 3 %%%%%%%%%%%
%%%%%%%%%%%%%%%%%%%%%%%%%%%%

\section{Multiply-transitive groups}\label{sec:trans_groups}

We compile essential results on transitive finite groups that will be applied in the following sections. For a more in-depth treatment of this topic, the reader is encouraged to consult \cite[Chapter 1]{Cameron-permutations-groups} and \cite[Chapter 7]{Robinson}.

\begin{definition}\label{def:transitive}
Let $n$ and $k$ be positive integers. A permutation group $P\leq S_n$ is $k$-transitive if for any two ordered sets of $k$ distinct numbers $(x_1,\ldots,x_k)$ and $(y_1,\ldots,y_k)$, $1\leq x_i,y_i\leq n$, there exists $\sigma\in P$ such that $\sigma(x_i)=y_i$ for $i=1,\ldots,k$.

An abstract finite group $G$ is called $k$-transitive if there exists a permutation representation $\chi\colon G\to S_n$ such that the permutation group $\chi(G)\leq S_n$ is $k$-transitive. 
\end{definition}

\begin{remark}\label{rmk:transitive}

Observe that every finite abstract group $G$ is $1$-transitive, as both its left and right regular representations define $G$ as a $1$-transitive permutation subgroup of $S_{|G|}$ \cite[pp.\ 7--8]{Cameron-permutations-groups}. Consequently, it is natural to focus on groups that are multiply-transitive, that is, groups which are $k$-transitive for some $k \geq 2$. Moreover, note that if a finite abstract group $G$ is $k$-transitive, then it is also $l$-transitive for every $l \leq k$.

\end{remark}

From the Classification of Finite Simple Groups, the complete list of all possible multiply-transitive groups is known \cite[Theorem 4.11]{Cameron-permutations-groups}. Below, we recall the classification of $4$-transitive groups:

\begin{theorem}\label{thm:class_transitive_groups}
All the $k$-transitive groups, $k\geq 4$ are:
\begin{enumerate}[label={\rm (\roman{*})}] 
    \item The symmetric group $S_n$ is $n$-transitive.

    \item The alternating group $A_n\leq S_n$ is $(n-2)$-transitive.

    \item The Mathieu groups $M_n\leq S_n$, $n=11,12,23,24$, are $4$-transitive for $n=11,23$ and $5$-transitive for $n=12,24$.
\end{enumerate}    
\end{theorem}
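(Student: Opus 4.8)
The plan is to decompose the statement into two parts of very different character: the claim that each listed family attains the stated degree of transitivity, and the claim that the list is exhaustive. The first part is elementary for $S_n$ and $A_n$ and a standard structural fact for the Mathieu groups; the second part is the genuinely deep assertion and rests on the Classification of Finite Simple Groups, so I would cite it rather than reprove it.

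For $S_n$, given two ordered tuples $(x_1,\ldots,x_n)$ and $(y_1,\ldots,y_n)$ of distinct elements of $\{1,\ldots,n\}$, each tuple exhausts the set, so $x_i\mapsto y_i$ extends to a unique bijection; hence $S_n$ is $n$-transitive, in fact sharply so. For $A_n$, given two ordered $(n-2)$-tuples of distinct elements I would first produce $\sigma\in S_n$ carrying one to the other as above. If $\sigma$ is even we are done; if $\sigma$ is odd, there remain exactly two points of $\{1,\ldots,n\}$ not among the $y_i$, and postcomposing $\sigma$ with their transposition leaves every $\sigma(x_i)=y_i$ unchanged while correcting the parity. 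This shows $A_n$ is $(n-2)$-transitive (and sharpness follows since an $(n-1)$-tuple would force a possibly odd permutation).

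For the Mathieu groups $M_{11},M_{12},M_{23},M_{24}$ I would recall their construction as automorphism groups of the Steiner systems $S(4,5,11)$, $S(5,6,12)$, $S(4,7,23)$, $S(5,8,24)$ (equivalently, of the associated codes), and invoke the classical fact that the transitivity on blocks of these designs yields the stated $4$- and $5$-transitivity of the corresponding groups on their natural domains; this is standard and can be taken from \cite[Chapter~4]{Cameron-permutations-groups}.

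The main obstacle is completeness: showing that no further groups are $k$-transitive for $k\geq 4$. There is no elementary route here. The argument proceeds through the classification of finite $2$-transitive groups, itself a consequence of the O'Nan--Scott theorem together with the Classification of Finite Simple Groups; inspecting that classification, the only groups attaining $4$-transitivity are precisely the symmetric and alternating families together with the four Mathieu groups at the degrees stated. As this is a recollection of a known result rather than a new contribution, I would simply cite \cite[Theorem~4.11]{Cameron-permutations-groups} for this direction.
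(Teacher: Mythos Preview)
Your proposal is correct and aligns with the paper's treatment: the paper does not supply a proof but simply recalls the result as a known consequence of the Classification of Finite Simple Groups, citing \cite[Theorem~4.11]{Cameron-permutations-groups}. You do the same for the completeness direction and add the elementary verifications for $S_n$ and $A_n$ and a pointer to the Steiner-system construction for the Mathieu groups, which is a reasonable expansion but not a genuinely different route.
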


\begin{lemma}\label{lem:unique_4_trans}
Let $ P \leq S_n $ be a $ 4 $-transitive permutation group. Then, for any faithful permutation representation  $ \psi\colon P \to S_n $, the image $ \psi(P) \leq S_n $ is also a $ 4 $-transitive permutation group. 
\end{lemma}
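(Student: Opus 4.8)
The key observation is that a $4$-transitive permutation group $P\leq S_n$ is, by Theorem \ref{thm:class_transitive_groups}, one of a very short list: $S_n$ itself, the alternating group $A_n$ (which is $4$-transitive precisely when $n-2\geq 4$, i.e.\ $n\geq 6$), or one of the four Mathieu groups $M_{11},M_{12},M_{23},M_{24}$ in their natural degree. So the plan is to run through these cases and show that in each one, any abstract copy of $P$ sitting inside $S_n$ is again $4$-transitive, using known facts about subgroups of the symmetric group.

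First I would dispose of the case $P=S_n$: a monomorphism $\psi\colon S_n\to S_n$ is necessarily an automorphism (it is an injective endomorphism of a finite group), so $\psi(S_n)=S_n$ is $n$-transitive, hence $4$-transitive for $n\geq 5$. Next, for $P=A_n$ with $n\geq 6$ (excluding the small exceptional isomorphism $A_6\cong A_6$ anomalies which do not affect the index argument): the image $\psi(A_n)$ is a subgroup of $S_n$ of index $2$. Any subgroup of $S_n$ of index $2$ equals $A_n$ (since $A_n$ is the unique subgroup of index $2$ in $S_n$ for $n\geq 2$; equivalently $[S_n:A_n]=2$ and $S_n/A_n$ is the unique order-$2$ quotient). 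Hence $\psi(A_n)=A_n$, which is $(n-2)$-transitive and therefore $4$-transitive. The remaining cases are the Mathieu groups: here I would invoke that each $M_n$ (for $n=11,12,23,24$) is, up to conjugacy, the unique subgroup of $S_n$ isomorphic to $M_n$ — more precisely, a subgroup of $S_n$ abstractly isomorphic to $M_n$ must be conjugate to the standard $4$-transitive copy, because $\mathrm{Aut}(M_n)$-considerations and the maximality/uniqueness of the Mathieu subgroups inside the corresponding symmetric groups force the embedding to be the natural one. Since conjugation by an element of $S_n$ preserves $k$-transitivity, $\psi(M_n)$ is $4$-transitive.

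The step I expect to be the main obstacle is the Mathieu case: justifying that an abstract embedding $M_n\hookrightarrow S_n$ must land on (a conjugate of) the natural $4$-transitive copy. The cleanest route is a counting/primitivity argument: $|M_n|$ only divides $|S_n|$ in ways compatible with the natural action — a subgroup of $S_n$ of order $|M_n|$ either is intransitive or imprimitive (and one checks the orbit/block structure is numerically impossible given $|M_n|$), or it is primitive, and then one can cite the classification of primitive groups of the relevant degrees, or simply the fact that $M_n$ is a maximal primitive subgroup of $A_n$ (resp.\ $S_n$) realized uniquely up to conjugacy. Alternatively, one avoids classification entirely by noting $M_n$ is simple, so $\psi(M_n)$ is a simple transitive (indeed primitive, by simplicity and minimality of the degree) subgroup of $S_n$ of the right order, and invoking the known list of such subgroups in degrees $11,12,23,24$. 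I would present whichever of these is shortest given the references already cited, and fold the verification that the small excluded values ($n\leq 4$, where $4$-transitivity is vacuous or degenerate) cause no trouble into a one-line remark.
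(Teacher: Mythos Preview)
Your overall strategy matches the paper's: use the classification of $4$-transitive groups (Theorem~\ref{thm:class_transitive_groups}) and treat $S_n$, $A_n$, and the Mathieu groups case by case. Your arguments for $S_n$ and $A_n$ are correct and essentially identical to the paper's one-line observation that $S_n$ contains a unique copy of each.

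There is, however, a genuine error in your Mathieu case. You assert that any abstract embedding $M_n\hookrightarrow S_n$ lands in a single conjugacy class, and this is \emph{false} for $n=12$: since $\lvert\operatorname{Out}(M_{12})\rvert=2$, the two degree-$12$ actions of $M_{12}$ (on the cosets of the two conjugacy classes of $M_{11}$-subgroups) are inequivalent, giving two non-conjugate copies of $M_{12}$ inside $S_{12}$. Your proposed justification via ``$\Aut(M_n)$-considerations'' would in fact detect this rather than rule it out. The paper resolves this by consulting the ATLAS: for $n=11,23,24$ there is indeed a unique conjugacy class, while for $n=12$ there are two inequivalent embeddings, \emph{but both are $5$-transitive}, so the conclusion still holds. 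Your alternative route through the primitive-group lists in degrees $11,12,23,24$ would also work, but it must be phrased as ``every faithful degree-$n$ representation of $M_n$ is $4$-transitive'' rather than ``there is only one such representation up to conjugacy''.
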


\begin{proof}
The result is straightforward for $ P = A_n $ and $ P = S_n $, as in these cases $ S_n $ contains only one conjugacy class of these subgroups. For the Mathieu groups, the claim follows from a case-by-case analysis of their possible permutation representations, as described in the \emph{ATLAS of Finite Groups Representations} \cite{Atlas-representation}. Specifically:
\begin{itemize}
    \item For $ M_n $ with $ n = 11, 23, 24 $, there exists a unique copy of $ M_n \leq S_n $ (up to inner conjugation within $ S_n $).
    \item For $ M_{12} $, there are two non-equivalent representations $ M_{12} \leq S_{12} $, both of which are $ 5 $-transitive.
\end{itemize}
\end{proof}

%%%%%%%%%%%%%%%%%%%%%%%%%%%%
%%%%%%%%%%% SECTION 4 %%%%%%%%%%%
%%%%%%%%%%%%%%%%%%%%%%%%%%%%

\section{Proof of Theorem \ref{thmx:main_A}}\label{sec:Proof_A}

To ease the readability of this section, we have structured it as a series of lemmas, all of which share the assumptions stated in Theorem \ref{thmx:main_A}. Throughout the discussion, we work over a ground field $ \K $ and consider an idempotent $ n $-dimensional evolution $ \K $-algebra $ X $ with a fixed natural basis $ \B = \{b_1, b_2, \ldots, b_n\} $. The structure matrix associated to this basis is denoted by $ M_{\B} = (\mu_{ij})_{{i,j}=1,\ldots,n} $.
 
As stated in Corollary \ref{cor:cor} and Remark \ref{rmk:equivalent_representations}, for any automorphism $ \varphi \in \Aut(X) $, there exist nonzero scalars $ \lambda_i \in \K $ for $ i = 1, \ldots, n $, and a permutation $ \sigma \in S_n $ such that $ \varphi(b_i) = \lambda_i b_{\sigma(i)} $ for all $ i = 1, \ldots, n $. Consequently, one can define the permutation representation $ \rho_X : \Aut(X) \to S_n $ by setting $ \sigma = \rho_X(\varphi) $.

Now, since $ \varphi \in \Aut(X)$ is an algebra morphism, the equality \( \varphi(b_i^2) = \varphi(b_i)^2 \) holds for each \( i = 1, \ldots, n \). Expanding both sides, we obtain:
\[
\varphi(b_i^2) = \varphi\left(\sum_j \mu_{ij} b_j\right) = \sum_j \mu_{ij} \lambda_j b_{\sigma(j)},
\]
\[
\varphi(b_i)^2 = (\lambda_i b_{\sigma(i)})^2 = \lambda_i^2 \sum_{\sigma(j)} \mu_{\sigma(i)\sigma(j)} b_{\sigma(j)}.
\]
Comparing the coefficients of \( b_{\sigma(j)} \), we deduce:
\begin{equation}\label{eq:funeq}
    \lambda_j \mu_{ij} = \lambda_i^2 \mu_{\sigma(i)\sigma(j)}, \quad \text{for all } i, j = 1, \ldots, n.
\end{equation}

Conversely, given a permutation \( \sigma \in S_n \) and coefficients \( \lambda_i \neq 0 \) for \( i = 1, \ldots, n \), one can construct a linear automorphism of \( X \) that maps each \( b_i \) to \( \lambda_i b_{\sigma(i)} \). This linear map becomes an algebra automorphism if and only if Equation \eqref{eq:funeq} is satisfied for all \( i, j = 1, \ldots, n \).

Now, in view of Equation \eqref{eq:funeq}, one infers that the way all possible $\sigma=\rho_X(\varphi)\in\rho_X\big(\Aut(X)\big)$ act on pairs $(i,j)$ for $i,j=1,\ldots, n$, i.e.\ the $2$-transitivity of the permutation group $\rho_X\big(\Aut(X)\big)$, should impose specific regularity conditions on the structure coefficients of $ X $. This is formalized in the following result.
\begin{lemma}\label{lemmacomplete}
  Let $\rho_X\big(\Aut(X)\big)\leq S_n$ be a  $2$-transitive group. Then, either $\mu_{ij}=0$ for all $i\neq j$, or $\mu_{ij}\neq 0$ for all $i\neq j$. Similarly if $\rho_X\big(\Aut(X)\big)$ is transitive, then either $\mu_{ii}=0$ for all $i \in \{1, \ldots, n\}$, or $\mu_{ii} \neq 0$ for all $i \in \{1, \ldots, n\}$.
\end{lemma}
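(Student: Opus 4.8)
The plan is to exploit the functional equation \eqref{eq:funeq} together with the $2$-transitivity (resp. transitivity) of $\rho_X\big(\Aut(X)\big)$ to propagate the vanishing (or non-vanishing) of a single structure constant to all of them. The key observation is that \eqref{eq:funeq}, namely $\lambda_j\mu_{ij}=\lambda_i^2\mu_{\sigma(i)\sigma(j)}$, implies that for every $\varphi\in\Aut(X)$ with associated data $(\sigma,\{\lambda_i\})$, the constant $\mu_{ij}$ is zero if and only if $\mu_{\sigma(i)\sigma(j)}$ is zero, since the $\lambda_i$ are all non-zero. In other words, the map $(i,j)\mapsto\sigma(i),\sigma(j))$ preserves the ``support pattern'' of the structure matrix off the diagonal, and separately preserves the pattern of vanishing on the diagonal.

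For the first assertion, I would argue as follows. Suppose $\mu_{i_0j_0}\neq 0$ for some pair $i_0\neq j_0$; I want to show $\mu_{ij}\neq 0$ for every $i\neq j$. Fix an arbitrary pair $(i,j)$ with $i\neq j$. By $2$-transitivity of $\rho_X\big(\Aut(X)\big)$ applied to the ordered pairs $(i_0,j_0)$ and $(i,j)$, there exists $\varphi\in\Aut(X)$ with $\sigma:=\rho_X(\varphi)$ satisfying $\sigma(i_0)=i$ and $\sigma(j_0)=j$. Plugging $i_0,j_0$ into \eqref{eq:funeq} gives $\lambda_{j_0}\mu_{i_0j_0}=\lambda_{i_0}^2\mu_{ij}$; since the left-hand side is non-zero (product of non-zero scalars), we conclude $\mu_{ij}\neq 0$. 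Since $(i,j)$ was arbitrary, either all off-diagonal constants vanish, or none do. The diagonal case is entirely analogous but only needs transitivity: if $\mu_{i_0i_0}\neq 0$ and $i$ is any index, transitivity supplies $\varphi$ with $\sigma(i_0)=i$, and \eqref{eq:funeq} with $i=j=i_0$ reads $\lambda_{i_0}\mu_{i_0i_0}=\lambda_{i_0}^2\mu_{ii}$, forcing $\mu_{ii}\neq 0$.

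I do not anticipate a serious obstacle here — the argument is a direct unwinding of the functional equation once one has Corollary \ref{cor:cor} identifying automorphisms with scaled permutations, and the transitivity hypotheses are used in exactly the way they are designed to be used. The one point requiring a small amount of care is that $2$-transitivity must be invoked for \emph{ordered} pairs of \emph{distinct} elements, which is fine since we only ever compare pairs $(i_0,j_0)$ and $(i,j)$ with $i_0\neq j_0$ and $i\neq j$; and one should note that $\sigma$ being a permutation automatically sends distinct indices to distinct indices, so $\sigma(i_0)\neq\sigma(j_0)$ is consistent with $i\neq j$. A clean way to present the whole thing is to first record the ``support-preservation'' consequence of \eqref{eq:funeq} as a one-line remark ($\mu_{ij}=0\iff\mu_{\sigma(i)\sigma(j)}=0$), and then dispatch both statements of the lemma by the transitivity argument above.
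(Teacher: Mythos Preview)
Your proposal is correct and follows essentially the same approach as the paper's own proof: both use Equation \eqref{eq:funeq} together with the non-vanishing of the $\lambda_i$'s to deduce that $\mu_{ij}=0\iff\mu_{\sigma(i)\sigma(j)}=0$, and then invoke $2$-transitivity (resp.\ transitivity) to propagate the vanishing pattern across all off-diagonal (resp.\ diagonal) entries. The only cosmetic difference is that the paper phrases the conclusion as an ``iff'' between two arbitrary pairs, whereas you fix one non-zero entry and spread it; the content is identical.
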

\begin{proof}

Let $i \neq j$ and $k \neq l$ be two arbitrary pairs of indices. By $2$-transitivity, there exists $\varphi \in \Aut(X)$ such that $\sigma(i) = k$ and $\sigma(j) = l$, where $\sigma = \rho_X(\varphi)$. Using Equation \eqref{eq:funeq} for $i, j$, and noting that  $\lambda_i$ and $\lambda_j$ are nonzero, it follows that $\mu_{ij} = 0$ if and only if $\mu_{kl} = 0$.

Similarly, if $\rho_X\big(\Aut(X)\big)$ is transitive, for any $i$ and $j$, there exists $\phi \in \Aut(X)$ such that $\sigma (i) =j$ where $\sigma = \rho_X(\phi)$.  Applying Equation \eqref{eq:funeq} for $i=j$, we deduce that $\mu_{ii} = 0$ if and only if $\mu_{jj} = 0$.

\end{proof}

We can prove that $\rho_X$ is faithful if $\rho_X\big(\Aut(X)\big) \leq S_n$ is transitive, and if at least one diagonal structure coefficient, $\mu_{ii} = (M_{\B})_{ii}$, is nonzero for some $i \in \{1, \ldots, n\}$.

\begin{lemma}\label{lem:lemmacoef1}
Let  $\rho_X\big(\Aut(X)\big)\leq S_n$ be a transitive group, and suppose that there exists $k \in \{1,\ldots,n\}$ such that $\mu_{kk} \neq 0$. Then, for any $\varphi \in \Aut(X)$, the nonzero scalars $\lambda_i$ associated to $\varphi$ by Corollary \ref{cor:cor} are  uniquely determined by $\sigma=\rho_X(\varphi).$ Namely,  $$\lambda_i=\frac{\mu_{ii}}{\mu_{\sigma(i)\sigma(i)}},$$ 
for all $i \in \{1, \ldots, n\}$ and therefore $\rho_X$ is faithful.
\end{lemma}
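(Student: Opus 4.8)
The plan is to extract the scalars $\lambda_i$ directly from the functional equation \eqref{eq:funeq} by specializing it to the diagonal. First I would invoke Lemma \ref{lemmacomplete}: since $\rho_X\big(\Aut(X)\big)\leq S_n$ is transitive and $\mu_{kk}\neq 0$ for some $k$, it follows that $\mu_{ii}\neq 0$ for \emph{every} $i=1,\dots,n$. This is the only place the transitivity hypothesis is used.

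Next, fix $\varphi\in\Aut(X)$ and let $(\lambda_i)_{i}$ and $\sigma=\rho_X(\varphi)$ be the nonzero scalars and the permutation attached to $\varphi$ by Corollary \ref{cor:cor}. Setting $i=j$ in \eqref{eq:funeq} gives $\lambda_i\mu_{ii}=\lambda_i^2\mu_{\sigma(i)\sigma(i)}$. Since $\lambda_i\neq 0$ we may cancel one factor of $\lambda_i$, and since $\mu_{\sigma(i)\sigma(i)}\neq 0$ by the first step, we may divide to obtain
$$\lambda_i=\frac{\mu_{ii}}{\mu_{\sigma(i)\sigma(i)}},$$
exactly as claimed. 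In particular the scalars $\lambda_i$ are uniquely determined by $\sigma$.

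For faithfulness of $\rho_X$: if $\varphi\in\ker\rho_X$ then $\sigma=\id$, so the displayed formula yields $\lambda_i=\mu_{ii}/\mu_{ii}=1$ for all $i$, whence $\varphi(b_i)=b_i$ for every element of the natural basis $\B$, and therefore $\varphi=\id$. Equivalently, if $\rho_X(\varphi)=\rho_X(\varphi')$ then $\varphi$ and $\varphi'$ have the same permutation part and, by the formula, the same scalar part, so $\varphi=\varphi'$; thus $\rho_X$ is injective.

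There is no genuine obstacle here. The single point requiring care is to have both nonvanishing facts in hand before dividing in \eqref{eq:funeq}: $\lambda_i\neq 0$, which is part of Corollary \ref{cor:cor}, and $\mu_{ii}\neq 0$ for all $i$, which is precisely what Lemma \ref{lemmacomplete} provides from the transitivity assumption together with the hypothesis $\mu_{kk}\neq 0$.
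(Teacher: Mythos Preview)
Your proof is correct and follows essentially the same approach as the paper: invoke Lemma~\ref{lemmacomplete} to get $\mu_{ii}\neq 0$ for all $i$, then specialize Equation~\eqref{eq:funeq} to $i=j$ and cancel. Your write-up is in fact slightly more explicit than the paper's in spelling out the faithfulness conclusion.
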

\begin{proof}
By Lemma \ref{lemmacomplete}, if $\mu_{kk}\neq 0$, for some $k \in \{1,\ldots,n\}$, then $\mu_{ii} \neq 0$ for all $i \in \{1,\ldots,n\}$.  Then, by Equation \eqref{eq:funeq} with $i=j$ and $\sigma=\rho_X(\varphi)\in\rho_X\big(\Aut(X)\big)$,  we obtain that $\displaystyle \lambda_i^2=\frac{\mu_{ii}}{\mu_{\sigma(i)\sigma(i)}}\lambda_i$. Since $\lambda_i\neq 0$ we can simplify to $\displaystyle \lambda_i=\frac{\mu_{ii}}{\mu_{\sigma(i)\sigma(i)}}$.    
\end{proof}

Moreover, if  $\rho_X\big(\Aut(X)\big)\leq S_n$ is $2$-transitive, with $n \geq 3$, we have the following:

\begin{lemma}\label{lem:lemmacoef2}
Let $n \geq 3$, and let $J$ denote the set of triples of pairwise distinct indices $(i, j, k)$, where $i, j, k \in \{1, \ldots, n\}$. Suppose that $\rho_X\big(\Aut(X)\big) \leq S_n$ is a $2$-transitive group and that $\mu_{ij} \neq 0$ for some $i \neq j$. Then, for any $\varphi \in \Aut(X)$, the  nonzero scalars $\lambda_i$, associated to $\varphi$ by Corollary~\ref{cor:cor}, are uniquely determined by $\sigma = \rho_X(\varphi)$ and therefore $\rho_X$ is faithful.
More precisely, the function 
\[
\begin{array}{rcl}
J \times S_n & \xrightarrow{B} & \K^* \\
(i, j, k, \sigma) & \mapsto & \frac{\mu_{ik}}{\mu_{\sigma(i)\sigma(k)}}  \frac{\mu_{\sigma(j)\sigma(k)}}{\mu_{jk}} \frac{\mu_{ji}}{\mu_{\sigma(j)\sigma(i)}}
\end{array}
\]
is well-defined, and if $\varphi \in \Aut(X)$ with $\sigma = \rho_X(\varphi)$, then \(\lambda_i = B(i, j, k, \sigma)\) for any \((i, j, k) \in J\).
\end{lemma}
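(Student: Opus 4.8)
The plan is to exploit Equation~\eqref{eq:funeq} for three distinct pairs of indices lying in a triangle $(i,j,k)$, multiply the resulting relations together so that the scalars $\lambda_j$ and $\lambda_k$ cancel, and thereby isolate $\lambda_i$ in terms of structure constants and $\sigma$ alone. Concretely, fix $(i,j,k)\in J$ and $\varphi\in\Aut(X)$ with $\sigma=\rho_X(\varphi)$ and associated scalars $\lambda_1,\ldots,\lambda_n$. First I would record the three instances of Equation~\eqref{eq:funeq}: $\lambda_k\mu_{ik}=\lambda_i^2\mu_{\sigma(i)\sigma(k)}$ (from the pair $(i,k)$), $\lambda_k\mu_{jk}=\lambda_j^2\mu_{\sigma(j)\sigma(k)}$ (from the pair $(j,k)$), and $\lambda_i\mu_{ji}=\lambda_j^2\mu_{\sigma(j)\sigma(i)}$ (from the pair $(j,i)$). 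Note that by Lemma~\ref{lemmacomplete}, the hypothesis $\mu_{ij}\neq0$ for some $i\neq j$ forces $\mu_{pq}\neq0$ for \emph{all} $p\neq q$, so every denominator appearing below (and in the definition of $B$) is nonzero; in particular $B$ is a well-defined function into $\K^*$, since each factor is a ratio of nonzero scalars.

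Next I would solve each relation for a ratio: from the first, $\lambda_i^2/\lambda_k=\mu_{ik}/\mu_{\sigma(i)\sigma(k)}$; from the second, $\lambda_j^2/\lambda_k=\mu_{jk}/\mu_{\sigma(j)\sigma(k)}$; from the third, $\lambda_i/\lambda_j^2=\mu_{ji}/\mu_{\sigma(j)\sigma(i)}$. Dividing the first by the second gives $\lambda_i^2/\lambda_j^2=\dfrac{\mu_{ik}}{\mu_{\sigma(i)\sigma(k)}}\cdot\dfrac{\mu_{\sigma(j)\sigma(k)}}{\mu_{jk}}$, and multiplying this by the third relation yields
\[
\lambda_i \;=\; \frac{\lambda_i^2}{\lambda_j^2}\cdot\frac{\lambda_i}{\lambda_i^2}\cdot\lambda_j^2 \;=\; \Big(\frac{\lambda_i^2}{\lambda_j^2}\Big)\Big(\frac{\lambda_i}{\lambda_j^2}\Big)^{?}
\]
— more carefully, $\lambda_i = (\lambda_i^2/\lambda_j^2)\cdot(\lambda_i/\lambda_j^2)\cdot\lambda_j^2 / \lambda_i$ is not quite the grouping I want; the clean way is to observe $\lambda_i = (\lambda_i^2/\lambda_j^2)\cdot(\lambda_j^2/\lambda_i)^{-1}$ and substitute, giving $\lambda_i=\dfrac{\mu_{ik}}{\mu_{\sigma(i)\sigma(k)}}\cdot\dfrac{\mu_{\sigma(j)\sigma(k)}}{\mu_{jk}}\cdot\dfrac{\mu_{ji}}{\mu_{\sigma(j)\sigma(i)}}=B(i,j,k,\sigma)$. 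I will present the algebra in whichever bracketing makes the cancellation most transparent; the point is simply that the product of the three chosen instances of~\eqref{eq:funeq} has total $\lambda$-degree $1$ in $\lambda_i$ and degree $0$ in $\lambda_j,\lambda_k$ after cancellation.

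Having shown $\lambda_i=B(i,j,k,\sigma)$ for every $(i,j,k)\in J$, faithfulness of $\rho_X$ follows immediately: if $\varphi\in\ker\rho_X$ then $\sigma=\id$, so $\lambda_i=B(i,j,k,\id)=1$ for all $i$ (each factor of $B$ becomes $\mu_{pq}/\mu_{pq}=1$), whence $\varphi=\id$; here I use that $n\geq3$ guarantees $J\neq\emptyset$, so that such a triple exists for each $i$. The main (minor) obstacle is purely bookkeeping: choosing the three pairs so that the unwanted scalars cancel with the correct exponents, and keeping track of which index plays which role in each application of~\eqref{eq:funeq}; there is no conceptual difficulty once Lemma~\ref{lemmacomplete} is invoked to rule out zero denominators, and one should double-check that $B$ does not depend on the auxiliary choices $(j,k)$ — but this is automatic, since $B(i,j,k,\sigma)$ equals $\lambda_i$, a quantity defined by $\varphi$ alone.
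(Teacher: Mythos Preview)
Your approach is essentially identical to the paper's: both invoke Lemma~\ref{lemmacomplete} to guarantee all off-diagonal $\mu_{pq}$ are nonzero (hence $B$ is well defined), and then combine the three instances of Equation~\eqref{eq:funeq} for the pairs $(i,k)$, $(j,k)$, $(j,i)$ to eliminate $\lambda_j,\lambda_k$ and isolate $\lambda_i$. One algebraic slip to clean up in your write-up: from $\lambda_i\mu_{ji}=\lambda_j^2\mu_{\sigma(j)\sigma(i)}$ you obtain $\lambda_j^2/\lambda_i=\mu_{ji}/\mu_{\sigma(j)\sigma(i)}$ (your ratio is inverted), and then the correct identity is $\lambda_i=(\lambda_i^2/\lambda_j^2)\cdot(\lambda_j^2/\lambda_i)$ without the ${}^{-1}$, which immediately yields the stated formula for $B(i,j,k,\sigma)$.
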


\begin{proof}
First, notice that the map $B$ is well defined since, according to Lemma \ref{lemmacomplete},  $\mu_{ij}\neq 0$ for every $i\neq j$.
Now, we can repeatedly apply Equation \eqref{eq:funeq}, alternating the roles of $i$, $j$, and $k$ (which are assumed to be distinct), and dividing by the corresponding structure constants, to obtain:
\begin{gather}
 \lambda_k=\frac{\mu_{\sigma(j)\sigma(k)}}{\mu_{jk}} \lambda_j^2,\label{1}\\
\lambda_j^2=\frac{\mu_{ji}}{\mu_{\sigma(j)\sigma(i)}} \lambda_i,\label{2}\\
\lambda_k=\frac{\mu_{\sigma(j)\sigma(k)}}{\mu_{jk}}\frac{\mu_{ji}}{\mu_{\sigma(j)\sigma(i)}} \lambda_i\text{ by \eqref{1} and \eqref{2},}\label{3}\\
\lambda_k=\frac{\mu_{\sigma(i)\sigma(k)}}{\mu_{ik}} \lambda_i^2, \label{4}\\
\frac{\mu_{\sigma(i)\sigma(k)}}{\mu_{ik}} \lambda_i^2=\frac{\mu_{\sigma(j)\sigma(k)}}{\mu_{jk}}\frac{\mu_{ji}}{\mu_{\sigma(j)\sigma(i)}} \lambda_i,\text{ by \eqref{3} and \eqref{4},}\label{5}
\end{gather}
hence, since $\lambda_i \neq 0$,  by \eqref{5}, we finally obtain $\displaystyle \lambda_i=\frac{\mu_{ik}}{\mu_{\sigma(i)\sigma(k)}}\frac{\mu_{\sigma(j)\sigma(k)}}{\mu_{jk}}\frac{\mu_{ji}}{\mu_{\sigma(j)\sigma(i)}}.$\end{proof}

\begin{remark}\label{rmk:injective_by_EL}

The injectivity of $\rho_X$, as established in both Lemmas \ref{lem:lemmacoef1} and \ref{lem:lemmacoef2}, can also be deduced using the arguments presented in \cite{Elduque-Labra-2019}. Specifically, \cite[Theorems 2.7 and 3.2]{Elduque-Labra-2019} identify the kernel of $\rho_X$ with the group of $(2^{\operatorname{b}(\Gamma(X,\B))}-1)$-roots of unity in $\K$, where $\Gamma(X,\B)$ denotes the directed graph associated with $X$ (see \cite[Section 2.3]{Cabrera-Siles-Velasco}), and $\operatorname{b}(\Gamma)$ denotes the balance of the directed graph $\Gamma$ (see \cite[Section 2]{Elduque-Labra-2019}). 
Now, if either $\mu_{ii}\ne 0$ for every $i \in \{1,\ldots, n\}$ (which follows from the hypothesis of Lemma \ref{lem:lemmacoef1}, using  Lemma \ref{lemmacomplete}) or $n\geq 3$ and $\mu_{ij}\ne 0$ for every pairwise different $i,j \in \{1,\ldots, n\}$ (which follows from the hypothesis of Lemma \ref{lem:lemmacoef2}), then ${\operatorname{b}(\Gamma(X,\B))}=1$ and the kernel of $\rho_X$ is trivial.
\end{remark}

\begin{remark}\label{rmk:injective_by_Nos}
Lemmas \ref{lem:lemmacoef1} and \ref{lem:lemmacoef2} not only establish the injectivity of $\rho_X$, but also provide explicit formulas for determining $\varphi \in \Aut(X)$ in terms of $\rho_X(\varphi)$ and the structure coefficients of $X$. This contrasts with the more indirect arguments presented in the previous remark. These formulas reveal that $\varphi$ can be determined locally by the interaction of three elements from the natural bases and their images, rather than requiring a global analysis of the entire algebra structure. Such a local approach proves to be useful in the proof of Theorem \ref{thmx:main_A}. \end{remark}

Now, we prove a slightly modified version of Theorem \ref{thmx:main_A}\ref{thmx:main_A.2}:

\begin{theorem}\label{thm:2-trans} Let $X$ be an idempotent $n$-dimensional evolution $\K$-algebra, and let the structure matrix $M_\B$ have a nonzero diagonal entry for some natural basis $\B$. If the group $\rho_X\big(\Aut(X)\big) \leq S_n$ is $2$-transitive, then $\rho_X\big(\Aut(X)\big) = S_n$. \end{theorem}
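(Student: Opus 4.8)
The plan is to describe the permutation group $P:=\rho_X\big(\Aut(X)\big)\leq S_n$ explicitly as the set of permutations that preserve a single scalar invariant attached to the structure constants, and then to observe that $2$-transitivity forces that invariant to be constant, which makes the defining condition vacuous and hence $P=S_n$.

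First, since $\rho_X\big(\Aut(X)\big)$ is $2$-transitive it is transitive, so Lemma~\ref{lemmacomplete} together with the hypothesis yields $\mu_{ii}\neq 0$ for every $i=1,\dots,n$. Consequently the scalars $\nu_{ij}:=\mu_{ij}\mu_{jj}\mu_{ii}^{-2}\in\K$ are well defined for all $i\neq j$. The core claim is the identity
\[
P=\{\sigma\in S_n:\ \nu_{\sigma(i)\sigma(j)}=\nu_{ij}\ \text{ for all }i\neq j\}.
\]
For the inclusion $\subseteq$, take $\varphi\in\Aut(X)$ with associated data $\lambda_1,\dots,\lambda_n$ and $\sigma=\rho_X(\varphi)$ as in Corollary~\ref{cor:cor}: the case $i=j$ of Equation~\eqref{eq:funeq} forces $\lambda_i=\mu_{ii}\mu_{\sigma(i)\sigma(i)}^{-1}$ (this is exactly the content of Lemma~\ref{lem:lemmacoef1}), and substituting this value into the case $i\neq j$ of \eqref{eq:funeq} and rearranging — only nonzero diagonal entries appear in the denominators — produces precisely $\nu_{ij}=\nu_{\sigma(i)\sigma(j)}$. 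For the reverse inclusion, given $\sigma$ preserving $\nu$, set $\lambda_i:=\mu_{ii}\mu_{\sigma(i)\sigma(i)}^{-1}\neq 0$; a direct substitution shows that \eqref{eq:funeq} holds for all pairs $i,j$, so the linear map $b_i\mapsto\lambda_i b_{\sigma(i)}$ is an algebra automorphism of $X$ whose image under $\rho_X$ is $\sigma$.

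Finally, I invoke $2$-transitivity: given any two pairs of distinct indices $(i,j)$ and $(k,l)$ there is $\sigma\in P$ with $\sigma(i)=k$ and $\sigma(j)=l$, whence $\nu_{kl}=\nu_{\sigma(i)\sigma(j)}=\nu_{ij}$. Thus $\nu_{ij}$ is independent of the chosen pair, so the condition defining $P$ in the displayed identity holds for every $\sigma\in S_n$; therefore $P=S_n$, i.e.\ $\rho_X\big(\Aut(X)\big)=S_n$ (and, combined with the faithfulness part of Lemma~\ref{lem:lemmacoef1}, this recovers Theorem~\ref{thmx:main_A}\ref{thmx:main_A.2} under the stated hypothesis).

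The only step carrying any content is the identification of $P$ with the stabilizer of $\nu$; its heart is the observation of Lemma~\ref{lem:lemmacoef1} that nonzero diagonal entries rigidly pin down the scalars $\lambda_i$ in terms of $\sigma$ alone, so that membership in $\rho_X\big(\Aut(X)\big)$ becomes a closed condition on $\sigma$ expressible purely through the structure constants. Once this is in place, $2$-transitivity instantly collapses $\nu$ to a constant and the theorem drops out — in particular no appeal to Theorem~\ref{thm:class_transitive_groups} or to the classification of multiply transitive groups is required here.
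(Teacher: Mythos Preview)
Your proof is correct and uses the same ingredients as the paper's: Lemma~\ref{lemmacomplete} to get all $\mu_{ii}\neq 0$, Lemma~\ref{lem:lemmacoef1} to pin down the scalars $\lambda_i=\mu_{ii}/\mu_{\sigma(i)\sigma(i)}$, and $2$-transitivity applied pairwise. The organization, however, differs. The paper argues directly: given an arbitrary $\tau\in S_n$ it sets $\lambda_i^\tau=\mu_{ii}/\mu_{\tau(i)\tau(i)}$, and for each pair $i\neq j$ invokes $2$-transitivity to find some $\phi\in\Aut(X)$ whose permutation $\sigma$ agrees with $\tau$ on $\{i,j\}$; since $\lambda_i^\tau=\lambda_i^\sigma$ and $\lambda_j^\tau=\lambda_j^\sigma$, Equation~\eqref{eq:funeq} for $\tau$ at $(i,j)$ follows from the same equation for $\sigma$. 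You instead package the obstruction as a single scalar invariant $\nu_{ij}=\mu_{ij}\mu_{jj}\mu_{ii}^{-2}$, identify $\rho_X(\Aut(X))$ with the stabilizer of $\nu$, and observe that $2$-transitivity forces $\nu$ to be constant. This is the same computation rearranged --- your equivalence $\nu_{ij}=\nu_{\sigma(i)\sigma(j)}\Leftrightarrow$ Equation~\eqref{eq:funeq} is exactly what the paper checks implicitly --- but your formulation has the advantage of making the role of $2$-transitivity transparent: it is precisely the degree of transitivity needed to collapse a two-index invariant. Neither proof needs any classification of multiply transitive groups.
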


\begin{proof}

To prove that $\rho_X\big(\Aut(X)\big) = S_n$, it suffices to show that for any $\tau \in S_n$, there exists an automorphism $\varphi \in \Aut(X)$ such that $\rho_X(\varphi) = \tau$. As we observed at the beginning of this section, this is equivalent to finding nonzero scalars $\lambda_i^\tau$ that satisfy Equation \eqref{eq:funeq} for each pair $i,j$.
   
   Since by hypothesis  $(M_\B)_{ii} =\mu_{ii} \neq 0$, for some $i \in \{1, \ldots, n\}$,  if $\varphi \in \Aut(X)$ exists such that $\rho_X(\varphi) = \tau$, then 
 applying Lemma \ref{lem:lemmacoef1},  $\lambda_i^\tau = \frac{\mu_{ii}}{\mu_{\tau(i)\tau(i)}} \neq 0$ for all $i \in \{1, \ldots, n\}$. It then remains to verify that these nonzero scalars $\lambda_i^\tau$ satisfy Equation \eqref{eq:funeq} for each pair of distinct indices $i,j$.
 
 Let $i,j \in \{1, \ldots, n\}$ be distinct indices. Since $\rho_X\big(\Aut(X)\big) \leq S_n$ is $2$-transitive, there exists $\phi \in \Aut(X)$ such that $\rho_X(\phi) = \sigma \in S_n$, with $\sigma(i) = \tau(i)$ and $\sigma(j) = \tau(j)$. Now, let $\phi \in \Aut(X)$ be such that $\rho_X(\phi) = \sigma$ with coefficients $\lambda_k^\sigma$, $k  \in \{1, \ldots, n\}$. By Lemma \ref{lem:lemmacoef1}, we have that $\lambda_i^\tau = \lambda_i^\sigma$ and $\lambda_j^\tau = \lambda_j^\sigma$. Therefore, we obtain the following equality:
$$\lambda_j^\tau \mu_{ij} = \lambda_j^\sigma \mu_{ij} = (\lambda_i^\sigma)^2 \mu_{\sigma(i) \sigma(j)} = (\lambda_i^\tau)^2 \mu_{\tau(i) \tau(j)},$$
where the second equality follows from the fact that $\phi \in \Aut(X)$. Thus, the nonzero scalars $\lambda_i^\tau$ satisfy Equation \eqref{eq:funeq}, and they define an automorphism $\varphi \in \Aut(X)$ such that $\tau = \rho_X(\varphi) \in \rho_X\big(\Aut(X)\big)$.

 \end{proof}

We now have all the necessary ingredients to prove our main theorem:

\begin{proof}[Proof of Theorem \ref{thmx:main_A}]

Observe that Theorem \ref{thmx:main_A}\ref{thmx:main_A.1} is essentially a corollary of Lemmas \ref{lem:lemmacoef1} and \ref{lem:lemmacoef2}. We now turn our attention to proving Theorem \ref{thmx:main_A}\ref{thmx:main_A.2}.

Recall from Lemma \ref{lemmacomplete} that either $\mu_{ij} = 0$ for all $i \neq j$, or $\mu_{ij} \neq 0$ for all $i \neq j$. Similarly, either $\mu_{ii} = 0$ for all $i \in \{1, \ldots, n\}$, or $\mu_{ii} \neq 0$ for all $i$. If $\mu_{ii} \neq 0$ for every $i$, then Theorem \ref{thmx:main_A}\ref{thmx:main_A.2} follows directly from Theorem \ref{thm:2-trans}. Thus, we can assume that $\mu_{ij} \neq 0$ for $i \neq j$. Additionally, since $S_n$ has no proper $4$-transitive subgroups when $n < 5$, we further assume that $n \geq 5$.

We now proceed with an argument similar to that in the proof of Theorem \ref{thm:2-trans}: given $\tau \in S_n$, we construct $\varphi \in \Aut(X)$ such that $\rho_X(\varphi) = \tau$, by selecting nonzero scalars $\lambda_i^\tau$ that satisfy Equation \eqref{eq:funeq}, and defining $\varphi(b_i) = \lambda_i^\tau b_{\tau(i)}$.

By Lemma \ref{lem:lemmacoef2}, for $\varphi$ to be an automorphism, the scalars must be of the form $\lambda_i^\tau = B(i,j,k,\tau) \neq 0$ for some $i \neq j \neq k \neq i$. Moreover, they must be independent of the specific choices of $j$ and $k$: we select $j' \notin \{i,j,k\}$ and show that $B(i,j,k,\tau) = B(i,j',k,\tau)$. Indeed, since $\rho_X\big(\Aut(X)\big) \leq S_n$ is $4$-transitive, there exists $\phi \in \Aut(X)$ such that $\sigma = \rho_X(\phi)$ agrees with $\tau$ on the set $\{i,j,j',k\}$. Therefore, we have the following equalities: 
\begin{align*}
B(i,j,k,\tau) &= B(i,j,k,\sigma), \quad \text{because } \tau(i,j,k) = \sigma(i,j,k), \\
B(i,j,k,\sigma) &= B(i,j',k,\sigma), \quad \text{because $\sigma$ is induced by an automorphism of $X$}, \\
B(i,j',k,\sigma) &= B(i,j',k,\tau), \quad \text{because } \tau(i,j',k) = \sigma(i,j',k).
\end{align*}
By combining all three equations, we obtain that $B(i,j,k,\tau) = B(i,j',k,\tau)$. The argument showing that the definition of the scalars $\lambda_i^\tau$ is independent of $k$ follows similarly.

To complete the proof, it only remains to check Equation \eqref{eq:funeq} for the scalars $\lambda_i^\tau$ to obtain that $\varphi \in \Aut(X)$. Given $i, j \in \{1, \ldots, n\}$ (which may be equal), we select two additional auxiliary indices $k \neq l \in \{1, \ldots, n\}$, distinct from $i$ and $j$. We then proceed  as in the previous paragraph: we choose $\phi \in \Aut(X)$ such that $\sigma = \rho_X(\phi)$ agrees with $\tau$ when restricted to the set $\{i,j,k,l\}$. Let $\lambda_i^\sigma$ denote the nonzero scalars associated to $\phi$ by Corollary \ref{cor:cor}, then we have:
\begin{align*}
    \lambda_j^\tau \mu_{ij} &= B(j,k,l,\tau) \mu_{ij} = B(j,k,l,\sigma) \mu_{ij} = \lambda_j^\sigma \mu_{ij}, \; \text{because } \tau(j,k,l) = \sigma(j,k,l), \\
    \lambda_j^\sigma \mu_{ij} &= (\lambda_i^\sigma)^2 \mu_{\sigma(i) \sigma(j)}, \; \text{because } \sigma = \rho_X(\phi), \\
    (\lambda_i^\sigma)^2 \mu_{\sigma(i) \sigma(j)} &= B(i,k,l,\sigma)^2 \mu_{\sigma(i) \sigma(j)} \\
    & = B(i,k,l,\tau)^2 \mu_{\tau(i) \tau(j)} = (\lambda_i^\tau)^2 \mu_{\tau(i) \tau(j)}, \;\text{because } \tau(j,k,l) = \sigma(j,k,l).
\end{align*}

By combining these equations, we obtain
$$
\lambda_j^\tau \mu_{ij} = (\lambda_i^\tau)^2 \mu_{\tau(i) \tau(j)},
$$
which is precisely Equation \eqref{eq:funeq} for $i, j$, and $\tau$. Repeating this argument for all pairs $i, j$, we conclude that $\varphi \in \Aut(X)$, thus completing the proof.

\end{proof}

We finish this section proving Corollary \ref{corx:trans}:

\begin{proof}[Proof of Corollary \ref{corx:trans}]
Let $G \leq S_n$ be a $k$-transitive group with $k \geq 4$, and assume $\Aut(X) \cong G$. Recall that the kernel of the permutation representation $\Aut(X) \xrightarrow{\rho_X} S_n$ is a normal abelian subgroup \cite[Theorem 3.2]{Elduque-Labra-2019}. However, by the classification in Theorem \ref{thm:class_transitive_groups}, $G$ has no proper abelian normal subgroups. Hence, $\rho_X$ must be injective. Therefore, $\rho_X(\Aut(X)) \leq S_n$ and by Lemma \ref{lem:unique_4_trans}, it follows that $\rho_X(\Aut(X)) \leq S_n$ is $k$-transitive. Applying Theorem \ref{thmx:main_A}, we conclude that $\Aut(X) \cong S_n$.
\end{proof}

\begin{remark}\label{rmk:negative_Q1}
    Corollary \ref{corx:trans} shows that there is no $n$-dimensional idempotent evolution $\K$-algebra $X$, with $\K$ any field, such that $\Aut(X) \cong A_n$ for $n \geq 6$ or $\Aut(X) \cong M_n$ for $n = 11, 12, 23, 14$. In particular, the standard permutation representation $A_n \hookrightarrow S_n$ for $n \geq 6$ and $M_n \hookrightarrow S_n$ for $n = 11, 12, 23, 14$ cannot be realized as $\rho_X$ for any $X$ over any ground field $\K$. This illustrates that the answer to Question \ref{Quest:basica} is negative for some representations.
\end{remark}

%%%%%%%%%%%%%%%%%%%%%%%%%%%%
%%%%%%%%%%% SECTION 5 %%%%%%%%%%%
%%%%%%%%%%%%%%%%%%%%%%%%%%%%

 \section{Actions on idempotents}\label{sec:thmx_C}
 
 We have just observed (see Remark \ref{rmk:negative_Q1}) that certain permutation representations cannot be realized as $\rho_X$ for any idempotent evolution algebra $X$, indicating that the answer to Question \ref{Quest:basica} is negative in some cases. However, in this section, we show that the scenario changes completely when considering the permutation representation $\widetilde{\rho}_X$. Specifically, we establish a positive answer to Question \ref{Quest:basica2} by proving Theorem \ref{thmx:main_C}.

To that purpose, we build upon the ideas in \cite{CLTV} and construct idempotent evolution algebras from finite simple graphs. We begin by recalling \cite[Theorem~2.1]{CGV-PermMod}, a refinement of \cite[Theorem 3.1]{CMV}, which shows that permutation representations can be realized in the context of graphs:

 \begin{theorem}\cite[Theorem~2.1]{CGV-PermMod}\label{thm:CGV}
 Let $G$ be a finite group, $V$ be a finite non-empty set and $\xi:G \rightarrow \operatorname{Sym}(V)$ a permutation representation of $G$ on $V$. Then, there are infinitely many non-isomorphic (simple, undirected) finite connected graphs $\mathcal{G}$ such that:
 \begin{enumerate}[label={\rm (\roman{*})}]
     \item $V\subset V(\mathcal{G})$ and $V$ is $\Aut(\G)$-invariant.
     \item Every vertex in $\G$ has degree at least 2.
     \item $\Aut(\G)\cong G$;
     \item the restriction $G\cong \Aut(\G)\rightarrow \operatorname{Sym}(V)$ is precisely $\xi$.
 \end{enumerate}
 \label{thm:graphs}
 \end{theorem}

To clarify how we use Theorem \ref{thm:CGV}, we restate Theorem \ref{thmx:main_C} as follows:

\begin{theorem}\label{thm:repres}
Let $\K$ be a field, $G$ a finite group, $V$ a finite non-empty set, and $\xi: G \to \operatorname{Sym}(V)$ a permutation representation of $G$ on $V$. There exists an idempotent evolution $\K$-algebra $X$ with a natural basis $\B$ such that:

\begin{enumerate}[label={\rm (\arabic{*})}]
    \item The elements of $\widetilde{B}_X$, the set of natural idempotent elements of $X$, are indexed by elements of $V,$ hence we may identify $\widetilde{B}_X \cong V$. Recall that $\widetilde{B}_X$ is $\Aut(X)$-invariant.
    \item $\Aut(X) \cong G$.
    \item The permutation representation $\widetilde{\rho}_X: G \cong \Aut(X) \to \operatorname{Sym}(\widetilde{B}_X) \cong \operatorname{Sym}(V)$ is precisely $\xi$.
\end{enumerate}
 \end{theorem}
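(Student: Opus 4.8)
The plan is to take a graph $\mathcal{G}$ produced by Theorem~\ref{thm:CGV} for the data $(G,V,\xi)$ and attach to it an idempotent evolution algebra whose natural idempotents are exactly the vertices of $V$. Concretely, following the construction in \cite{CLTV}, I would let $\B=\{b_v : v\in V(\mathcal{G})\}$ be indexed by the vertices of $\mathcal{G}$, and define the structure matrix $M_\B=(\mu_{vw})$ by setting $\mu_{vw}\neq 0$ precisely when $v=w$ or $\{v,w\}$ is an edge of $\mathcal{G}$; the diagonal entries $\mu_{vv}$ and the off-diagonal entries will be chosen as carefully selected scalars in $\K$ (for instance using distinct values, or values encoding the local combinatorics of $\mathcal{G}$, so that no unintended symmetries arise). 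The resulting algebra $X$ is an evolution algebra by construction, and it is idempotent provided $M_\B$ is invertible, which one arranges by a suitable (generic) choice of the nonzero scalars; the hypothesis that every vertex of $\mathcal{G}$ has degree $\geq 2$ (item (ii) of Theorem~\ref{thm:CGV}) is exactly what is needed to guarantee that such a choice exists, so that $X^2=X$.

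Next I would pin down $\widetilde{\B}_X$. After rescaling the basis so that $\mu_{vv}=1$ for all $v$ (which is possible when all diagonal entries are nonzero), the computation in Proposition~\ref{prop:idemp_natural} shows that the idempotent natural elements are exactly the $b_v$ with $b_v^2=b_v$, and for a vertex $v$ this happens iff $v$ has no neighbours, i.e.\ degree $0$. Since here I instead want \emph{all} of $V$ to be idempotent and nothing else, the design has to be slightly different: I would arrange the scalars so that for $v\in V$ one has $b_v^2=b_v$ (so $b_v\in\widetilde{\B}_X$) while for $v\notin V$ the element $b_v$ is \emph{not} idempotent and, more importantly, cannot be rescaled to an idempotent natural element — which by the argument in Proposition~\ref{prop:idemp_natural} amounts to forcing $\mu_{vv}\neq 1$ (e.g.\ $\mu_{vv}=0$, or some value $\ne 1$) for $v\notin V$. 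Then $\widetilde{\B}_X=\{b_v : v\in V\}$, giving identification (1) and $m=|V|$.

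Then comes the automorphism computation, which is the real content. By Corollary~\ref{cor:cor} every $\varphi\in\Aut(X)$ has the form $\varphi(b_v)=\lambda_v b_{\sigma(v)}$ with $\sigma\in\operatorname{Sym}(V(\mathcal{G}))$, and Equation~\eqref{eq:funeq} then forces $\sigma$ to preserve the support pattern of $M_\B$ — that is, $\sigma$ must be a graph automorphism of $\mathcal{G}$ — and also (since $\{v: \mu_{vv}=1\}=V$ is preserved) to preserve $V$ setwise, with the scalars $\lambda_v$ determined accordingly. Conversely, for each $g\in\Aut(\mathcal{G})$ I would check that Equation~\eqref{eq:funeq} can be solved for suitable $\lambda_v\neq 0$, so $g$ lifts to an automorphism of $X$; one should additionally check the lift is unique, i.e.\ the kernel $D$ in Equation~\eqref{eq:intro} is trivial, which follows (as in Remark~\ref{rmk:injective_by_EL}) from balance considerations on the graph, or can be forced by the choice of scalars. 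The upshot is $\Aut(X)\cong\Aut(\mathcal{G})\cong G$, giving (2); and the induced action of $\Aut(X)$ on $\widetilde{\B}_X\equiv V$ is exactly the restriction $\Aut(\mathcal{G})\to\operatorname{Sym}(V)$, which by item (iv) of Theorem~\ref{thm:CGV} equals $\xi$, giving (3). Finally, infinitely many non-isomorphic $X$ arise because Theorem~\ref{thm:CGV} provides infinitely many non-isomorphic $\mathcal{G}$, and non-isomorphic graphs yield non-isomorphic algebras (the graph is recoverable from the support pattern of the essentially unique natural basis of $X$, by Theorem~\ref{thm:unique_basis}).

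The main obstacle I expect is the bookkeeping that simultaneously (a) keeps $M_\B$ invertible so $X$ is idempotent, (b) makes $\{v:\mu_{vv}=1\}$ coincide with $V$ so that $\widetilde{\B}_X$ is exactly $V$ and not larger, and (c) introduces \emph{no spurious automorphisms} beyond those coming from $\Aut(\mathcal{G})$ while still allowing \emph{every} graph automorphism to lift — all with a single choice of scalars valid over an arbitrary field $\K$, including small finite fields where there is little room to choose distinct nonzero values. Resolving (c) over arbitrary $\K$ is the delicate point; I would handle it by exploiting that the relevant data recovered from $X$ is the support graph together with the distinguished vertex subset $V$, both of which are intrinsic, so that any algebra automorphism is already forced to be a graph automorphism preserving $V$ before any scalar choice matters, reducing the scalar choice to merely ensuring idempotency and the diagonal condition (b).
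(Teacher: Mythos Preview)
Your high-level plan---start from a graph $\mathcal{G}$ supplied by Theorem~\ref{thm:CGV} and build an idempotent evolution algebra whose automorphism group is $\Aut(\mathcal{G})$---is exactly what the paper does. The gap is in the specific construction.

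You take $\B$ indexed by the vertices of $\mathcal{G}$ and propose that the support of $M_\B$ be the adjacency relation of $\mathcal{G}$. You then observe (correctly) that this forces $\widetilde{\B}_X$ to consist of the isolated vertices, which is useless since $\mathcal{G}$ is connected, and you say the design ``has to be slightly different'': for $v\in V$ set $b_v^2=b_v$. But that makes the row of $v$ in $M_\B$ the unit vector, so the support of $M_\B$ is \emph{no longer} the adjacency relation of $\mathcal{G}$. In particular every edge of $\mathcal{G}$ joining two vertices of $V$ becomes invisible in $M_\B$, and Theorem~\ref{thm:CGV} gives no guarantee that $V$ is independent in $\mathcal{G}$. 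Your later claim that ``$\sigma$ must be a graph automorphism of $\mathcal{G}$'' because it preserves the support pattern therefore does not follow. The two design goals---rows of $V$ trivial so that $\widetilde{\B}_X=V$, and support of $M_\B$ equal to adjacency so that $\Aut(X)\hookrightarrow\Aut(\mathcal{G})$---are in direct conflict in a vertex-only model.

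The paper resolves this by enlarging the basis: it takes $\B$ indexed by $V(\mathcal{G})\cup E(\mathcal{G})$ and sets
\[
b_v^2=b_v\ (v\in V),\qquad b_v^2=b_v+\sum_{w\in V}b_w\ (v\notin V),\qquad b_e^2=b_e+b_a+b_b\ \bigl(e=\{a,b\}\in E(\mathcal{G})\bigr).
\]
Now the idempotency condition on $V$ and the encoding of the edge structure live in different rows and no longer interfere. All entries are $0$ or $1$, so there are no ``generic scalar'' choices to worry about over small fields, and with a suitable ordering $M_\B$ is upper triangular with $1$'s on the diagonal, hence invertible over every $\K$---this is what actually delivers $X^2=X$, not the degree~$\geq 2$ hypothesis (that hypothesis is used only in a side argument to separate vertex-type from edge-type basis elements when $|V|=2$). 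With this construction the paper checks directly, using Corollary~\ref{cor:cor} and a row-weight count, that every $\varphi\in\Aut(X)$ permutes the three blocks $V$, $V(\mathcal{G})\setminus V$, $E(\mathcal{G})$, forces all scalars $\lambda$ to equal $1$, and induces a graph automorphism; conversely every graph automorphism lifts by the obvious permutation of $\B$. This gives $\Aut(X)\cong\Aut(\mathcal{G})\cong G$ with the correct restriction to $V$, with no appeal to balance or kernel computations.
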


 \begin{proof}
 We apply Theorem \ref{thm:graphs} to first obtain a graph $\G$ from which we can explicitly derive an evolution algebra with the desired properties, following a procedure similar to the one outlined in \cite{CLTV}.
 
 Let $\G$ be the given graph, with vertices labeled $V(\G)=\{ v_1,\ldots,v_n\}$ in such a way that $V=\{ v_1,\ldots,v_r\} \subset V(\G)$. We now define the algebra $X$ by giving a natural basis $\B = \{b_{v_i}:v_i\in V(\G)\} \cup \{b_{e_j}:e_j \in E(\G)\}$ and the squares of its elements:
  \begin{itemize}
    \item $\displaystyle b_{v_i}^2 = b_{v_i}$, for $v_i \in V$;
     \item $\displaystyle b_{v_i}^2 = b_{v_i} + \sum_{w \in V} b_w$, for $v_i \in V(\G) \setminus V$;
       \item $\displaystyle b_{e_i}^2 = b_{e_i} + b_{v_a} + b_{v_b}$, for $e_i \in E(\G)$ and $e_i = \{v_a, v_b\}$.
\end{itemize}
With this definition, we identify a vertex $v \in $ with $b_v$, so that $V \equiv \widetilde{\B}_X = { b \in \B : b^2 = b }$ as shown in (1). Next we analyze the group $\Aut(X)$ to check that $(2)$ and $(3)$ are also satisfied. 
First, note that by ordering $\B$ as defined earlier, the coefficient matrix $M_\B$ becomes upper-triangular with ones along the diagonal. This ensures that $M_\B$ is invertible, implying that $X$ is an idempotent evolution algebra. Then, by Corollary \ref{cor:cor},  we know that any automorphism of $X$ permutes the elements of $\B$ up to multiplication by nonzero scalars.
 
Since the number of nonzero coefficients, in each row of the coefficients matrix, is invariant under automorphisms, in the case where $r \neq 2$, we deduce that for any  $\varphi \in \Aut(X)$, the following holds: for any $v \in V$, $\varphi(b_v) \in \K^* b_w$ for some $w \in V$; for any $u \in V(\G) \setminus V$, $\varphi(b_u) \in \K^* b_w$ for some $w \in V(\G) \setminus V$; and for any $e \in E(\G)$, $\varphi(b_e) \in \K^* b_{e'}$ for some $e' \in E(\G)$.

  In the case where \(r = 2\), this argument fails, as it could be that for some \(v \in V(\G) \setminus V\),  \(\varphi(b_v) \in \K^* b_e\) with \(e = \{u, w\}\). However, we can rule this out as follows. Since each vertex of \(\G\) has degree at least 2, there exists an edge \(f = \{v, v'\}\). Then, we have
$$
\varphi(b_f)^2 = \varphi(b_f^2) = \varphi(b_f + b_v + b_{v'}) = \varphi(b_f)  + \varphi(b_v)  + \varphi(b_{v'})= \varphi(b_f)+ \lambda b_e + \varphi(b_{v'}) 
$$
for some \(\lambda \neq 0\). By Corollary \ref{cor:cor}, \(\varphi(b_f)\) is a scalar multiple of a basis element. According to the structure constants, the square of this basis element can only have a nonzero \(b_e\) component if \(\varphi(b_f) = \mu b_e\) for some \(\mu \neq 0\). This is a contradiction, as it would imply that \(\varphi\) is not injective. Similarly, \(\varphi(b_{v'})\) cannot contribute to the \(b_e\) component. Therefore, even in the case \(r = 2\), the same conclusion holds as in the previous paragraph.

 We deduce that there exists a permutation \(\sigma \in \operatorname{Sym}(V(\G))\) that preserves $V$  such that \(\varphi(b_v) \in \K^* b_{\sigma(v)}\) for vertices,  and a permutation of the edges that we also denote $\sigma$ such that $\varphi(b_e) \in \K^* b_{\sigma(e)}$ for edges. For this automorphism $\varphi$, we obtain scalars $\lambda_v, \lambda_e \in \K^*$ for each $v \in V(\G)$ and $e \in E(\G)$, such that $\varphi(b_v) = \lambda_v b_{\sigma(v)}$ and $\varphi(b_e) = \lambda_e b_{\sigma(e)}$.

Now, for $v \in V$, we have:
\[
\lambda_v b_{\sigma(v)} = \varphi(b_v) = \varphi(b_v^2) = \varphi(b_v)^2 = \lambda_v^2 b_{\sigma(v)}^2 = \lambda_v^2 b_{\sigma(v)}.
\]
Thus, $\lambda_v = 1$.

Next, for $v \in V(\G) \setminus V$, we consider:
\[
\varphi(b_v^2) = \varphi(b_v + \sum_{w \in V} b_w) = \lambda_v b_{\sigma(v)} + \sum_{w \in V} \lambda_w b_{\sigma(w)} = \lambda_v b_{\sigma(v)} + \sum_{w \in V} b_w,
\]
and
\[
\varphi(b_v)^2 = (\lambda_v b_v)^2 = \lambda_v^2 \left( b_{\sigma(v)} + \sum_{w \in V} b_w \right).
\]
Thus, $\lambda_v = 1$ for all $v \in V(\G)$.

 Now, let $e \in E(\G)$, with $e = \{v, w\}$ and $\sigma(e) = \{v', w'\}$. We have
$$\varphi(b_e^2) = \varphi(b_e + b_v + b_w) = \lambda_e b_{\sigma(e)} + \lambda_v b_{\sigma(v)} + \lambda_w b_{\sigma(w)}, $$
and
$$\varphi(b_e)^2 = (\lambda_e b_e)^2 = \lambda_e^2 (b_{\sigma(e)} + b_{v'} + b_{w'}).$$
Therefore, $\lambda_e = 1$ and $\{v', w'\} = \{\sigma(v), \sigma(w)\}$. This shows that the permutation $\sigma$ on the vertices is a graph isomorphism, while the permutation on the edges is the induced one.
With this, we have proved that an automorphism of $X$ induces an automorphism of $\G$. 

The converse is also true: consider an automorphism of $\G$, which can be described by a permutation $\sigma$ of the vertices, inducing a permutation of the edges (denoted by the same name) such that $\sigma(\{v,w\}) = \{\sigma(v), \sigma(w)\}$.
We define an automorphism $\varphi \in \Aut(X)$ on the natural basis by setting $\varphi(b_v) = b_{\sigma(v)}$ for $v \in V(\G)$ and $\varphi(b_e) = b_{\sigma(e)}$ for $e \in E(\G)$. To verify that this is an automorphism, we check that it preserves the squares of the basis elements. Indeed, 
for $v \in V$, we have that $\sigma(v) \in V$, so $\varphi(b_v^2) = \varphi(b_v) = b_{\sigma(v)} = (b_{\sigma(v)})^2$. Now, for 
 $v \in V(\G) \setminus V$, we have that
$\varphi(b_v^2) = \varphi(b_v + \sum_{w \in V} b_w) = b_{\sigma(v)} + \sum_{w \in V} b_{\sigma(w)} = b_{\sigma(v)} + \sum_{w \in V} b_w = (b_{\sigma(v)})^2$ using the fact that $\sigma$ permutes $V$.
Finally, for $e = \{u,v\} \in E(\G)$, we have that $\varphi(b_e^2) = \varphi(b_e + b_u + b_v) = b_{\sigma(e)} + b_{\sigma(u)} + b_{\sigma(v)} = (b_{\sigma(e)})^2$, since $\sigma$ is a graph automorphism.
Thus, $\varphi \in \Aut(X)$.

The maps we have constructed between $\Aut(X)$ and $\Aut(\G)$ are clearly mutual inverses and respect composition. Therefore, we conclude that $\Aut(X) \cong \Aut(\G)$ as groups. Moreover, the restriction of the action to $V$ commutes with this isomorphism, thus completing the proof of the theorem.

 \end{proof}

 %%%%%%%%%% BIBLIOGRAFIA EN BIBTEX %%%%%%%%%%%%%%

\bibliographystyle{abbrv}
\bibliography{references}

\end{document}